\def\fullpage {
\addtolength{\topmargin}{-2 cm}
\addtolength{\oddsidemargin}{-0.9cm} \addtolength{\textwidth}{+2 cm}
\addtolength{\textheight}{+4 cm}}
\newtheorem{thm}{Theorem}%[section]
\newtheorem*{thmall}{Asymmetric Local Lemma}%[section]
\newtheorem{lemma}[thm]{Lemma}
\newtheorem{prop}[thm]{Proposition}
\theoremstyle{remark}
\theoremstyle{definition}
\newtheorem{definition}[thm]{Definition}
\newcommand{\Ex}{\mathop{\bf E\/}}
\title{Coloring sparse hypergraphs}
\begin{document}

\author{
\quad{Jeff Cooper}
\thanks{Department of Mathematics, Statistics, and Computer
Science, University of Illinois at Chicago, Chicago, IL 60607, USA;  email:
jcoope8@uic.edu}
\quad{Dhruv Mubayi}
\thanks{Department of Mathematics, Statistics, and Computer Science, University of Illinois at Chicago, Chicago IL 60607, USA; research supported in part by NSF grants DMS-0969092 and DMS-1300138; email:  mubayi@uic.edu}
}

\maketitle

%%%%%%%%%%%%%%%%%%%%%%%%%%%%%%%
\begin{abstract}
Fix $k \geq 3$, and let $G$ be a $k$-uniform hypergraph with maximum degree $\Delta$.
Suppose that for each $l = 2, \dots, k-1$, every set of $l$ vertices of $G$
is in at most $\Delta^{\frac{k-l}{k-1}}/f$ edges. Then the chromatic number of
$G$ is $O((\frac{\Delta}{\log f})^{\frac{1}{k-1}})$.
This extends results of Frieze and the second author \cite{fmcoloringk} and Bennett and Bohman \cite{benboh}.

A similar result is proved for $3$-uniform hypergraphs where every vertex lies in few triangles.
This generalizes a result of Alon, Krivelevich, and Sudakov \cite{aloncoloringsparse}, who
proved the result for graphs.

Our main new technical contribution is a deviation inequality for
positive random variables with expectation less than $1$. 
This may be of independent interest and have further applications.
\end{abstract}

\section{Introduction}
A hypergraph $G$ is a tuple consisting of a set of vertices $V$
and a set of edges $E$, which are subsets of $V$;
we will often associate $G$ with its edge set $E$.
A hypergraph has rank $k$ if
every edge contains between $2$ and $k$ vertices and is $k$-uniform if
every edge contains exactly $k$ vertices. A proper coloring of $G$ is
an assignment of colors to the vertices so that no edge is monochromatic.
The chromatic number, $\chi(G)$, is the minimum number of colors
in a proper coloring of $G$.

A hypergraph is \emph{linear} if every pair of vertices is contained in at most one edge.
A \emph{triangle} in a hypergraph $G$ is a set of three distinct edges
$e,f,g \in G$ and three distinct vertices $u,v,w \in V(G)$ such that
$u,v \in e$, $v, w \in f$, $w, u \in g$, and $\{u,v,w\} \cap e \cap f \cap g = \emptyset$.
For example, the three triangles in a $3$-uniform hypergraph are 
$C_3 = \{abc, cde, efa\}$, $F_5 = \{abc, abd, ced\}$, and $K_4^- = \{abc, bcd, abd\}$.

The \emph{degree} of a vertex $u \in V(G)$ is the number of edges containing that vertex.
The \emph{maximum degree} of a hypergraph $G$ is the maximum degree of a vertex $v \in V(G)$.
Improving on results of Catlin~\cite{catlin}, Lawrence~\cite{lawrence}, 
Borodin and Kostochka~\cite{borodinkostochka}, and Kim~\cite{kimc4},
Johansson \cite{johansson} showed that if $G$ is a triangle-free graph
with maximum degree $\Delta$,
then 
\begin{equation}\label{johansson}
\chi(G) = O(\frac{\Delta}{\log \Delta}).
\end{equation}
Random graphs show that the $\log\Delta$ factor in \eqref{johansson} is optimal.
Recently, Frieze and the second author~\cite{fmcoloringk} generalized \eqref{johansson}
to linear $k$-uniform hypergraphs, and the current authors~\cite{cmcoloring3} proved
slightly stronger results for $k=3$. 
\begin{thm}[\cite{fmcoloringk}]\label{lineark}
Fix $k \geq 3$.
If $G$ is a $k$-uniform, linear hypergraph with maximum degree $\Delta$, then
\[
\chi(G) = O (\frac{\Delta}{\log \Delta})^{\frac{1}{k-1}}.
\]
\end{thm}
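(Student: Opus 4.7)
The strategy is the semi-random ``nibble'' method. Work with a palette of $q = C(\Delta/\log\Delta)^{1/(k-1)}$ colors for a large constant $C = C(k)$, and maintain for each vertex $v$ a list $L_v \subseteq [q]$ of currently available colors, initialised to $L_v^{(0)} = [q]$. Iterate for $I = O(\log\log\Delta)$ rounds: in each round some vertices are permanently colored and the remaining lists shrink in a controlled way. The aim is that, after the final round, each uncolored vertex has a list whose $(k-1)$-st power dominates its residual degree by a constant factor, so that a final application of the Lov\'asz Local Lemma completes the coloring.

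Each round follows the standard recipe: independently for every pair $(v,c)$ with $c \in L_v^{(i)}$, activate $c$ at $v$ with some small probability $p$. If exactly one color is activated at $v$, tentatively color $v$ with it; the assignment is kept unless it would complete a monochromatic edge, and in any event the activated color (as well as any color that would create a threat) is removed from the relevant lists. A linearity-of-expectation calculation gives
\[
\Ex\bigl[\,|L_v^{(i+1)}|\,\bigr] \;\approx\; \ell_i\,(1-p)\,\bigl(1 - p/\ell_i\bigr)^{T_v^{(i)}},
\]
where $\ell_i$ is the typical list size and $T_v^{(i)}$ is the ``threat'' count at $v$: the number of partial edges through $v$ whose other $k-1$ vertices still retain the color in question. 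Choosing $p$ appropriately yields a recursion $\ell_i \to \ell_\infty$ whose $(k-1)$-st power dominates the residual maximum degree.

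The two invariants to maintain via the Lov\'asz Local Lemma are: (a) for every uncolored $v$, $|L_v^{(i)}|$ stays close to its expectation $\ell_i$; and (b) for every $(v,c)$ with $c \in L_v^{(i)}$, the threat count $T_v^{(i)}(c)$ stays close to its expectation $\Theta(\ell_i)$. Linearity of $G$ is crucial for (b): since every two vertices lie in at most one edge, distinct threats at $v$ involve essentially disjoint sets of other vertices, so the threat indicators are nearly independent. Each bad event (either invariant violated by a $(1 \pm \Delta^{-\varepsilon})$-factor) depends on at most $\mathrm{poly}(\Delta)$ others, and must occur with probability at most $e^{-\Delta^{\Omega(1)}}$ for the asymmetric Local Lemma to close the induction.

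The main obstacle is the concentration in (b). The count $T_v^{(i)}(c)$ is a sum of indicators, each with small expectation, whose total expectation $\Theta(\ell_i)$ can be much smaller than $\log \Delta$ in the late rounds. Standard tools such as Talagrand's inequality or Azuma--Hoeffding only deliver $\sqrt{\Ex[T_v^{(i)}(c)]}$-scale deviations, which is too crude once the expectation approaches $1$; this is precisely the regime in which the paper's new deviation inequality for positive random variables with expectation less than $1$ is tailored to apply. Once the invariants survive round $I$, a final application of the asymmetric Local Lemma to the residual hypergraph produces a proper $q$-coloring, giving $\chi(G) \leq q = O\bigl((\Delta/\log\Delta)^{1/(k-1)}\bigr)$.
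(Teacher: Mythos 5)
Theorem~\ref{lineark} is a cited result from~\cite{fmcoloringk}; the present paper does not supply its own proof. In fact the paper uses Theorem~\ref{lineark} as a black box when proving Theorem~\ref{corlin} (``each part is linear and can be colored \dots\ by Theorem~\ref{lineark}''), so there is no internal proof to compare your attempt against. At the level of a high-level outline, a Johansson-style semi-random nibble with a palette of $q = C(\Delta/\log\Delta)^{1/(k-1)}$ colors, activation/tentative-coloring rounds, and LLL-guarded invariants on list sizes and threat counts is indeed what~\cite{fmcoloringk} does, and linearity is, as you say, the hypothesis that makes the threat indicators at a fixed vertex nearly independent.

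Your last paragraph, however, contains a concrete misattribution that would derail the argument. Lemma~\ref{hittinglemma} -- the paper's new deviation inequality for random variables with expectation below $1$ -- is \emph{not} part of the proof of Theorem~\ref{lineark}, and logically cannot be, since it postdates~\cite{fmcoloringk}. In this paper it is used only inside the proof of Lemma~\ref{partitionlemma}: it bounds the \emph{transversal number} $\tau(F'_u)$ of the random sub-hypergraph of copies of $H$ through $u$, sidestepping the $M_0, M_1 \geq 1$ limitation of the Kim--Vu bound (Theorem~\ref{kimvu}) that the paper describes at the start of Section~\ref{devinequality}. The concentration issues in the nibble proof of~\cite{fmcoloringk} are handled by standard Talagrand/McDiarmid-type inequalities together with the Local Lemma; the iteration is stopped while list sizes and threat-count expectations remain polynomial or polylogarithmic in $\Delta$, so the ``mean below $1$'' regime never arises there. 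Moreover, even when a sum of nearly independent indicators has mean less than $1$, the \emph{upper} tail is already controlled well by Chernoff-type bounds, so the claim that only $\sqrt{\Ex[T]}$-scale deviations are available mischaracterizes the obstruction; the genuine obstruction that Lemma~\ref{hittinglemma} addresses is for Kim--Vu-style polynomial concentration, which is what the partitioning argument (not the nibble) needs.
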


\begin{thm}[\cite{cmcoloring3}]\label{trifree3}
If $G$ is a $3$-uniform, triangle-free hypergraph with maximum degree $\Delta$, then
\[
\chi(G) = O( (\frac{\Delta}{\log \Delta})^{1/2} ).
\]
\end{thm}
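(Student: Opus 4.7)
The strategy is the semi-random (R\"odl nibble) method, aiming for a proper coloring with $q = C(\Delta/\log \Delta)^{1/2}$ colors for a large enough constant $C$. Each vertex $v$ carries a list $L_v \subseteq [q]$ of still-admissible colors, initially all of $[q]$. We run $T = \Theta(\log \log \Delta)$ rounds of a randomized partial-coloring procedure and then finish the remaining uncolored vertices with the asymmetric Lov\'asz Local Lemma.

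In one round every uncolored vertex independently picks a tentative color uniformly from its current list and activates that choice with a small fixed probability $p$. Activations that would complete a monochromatic edge are withdrawn, and surviving colors are then removed from the lists of vertices lying in an edge whose other two vertices have just been colored with that color. Set $\ell_t := \min_v |L_v|$ and $d_t := \max_{v,c} |\{\{v,u,w\}\in G : c \in L_u \cap L_w\}|$. Short first-moment computations give explicit recurrences for $\ell_{t+1}$ and $d_{t+1}$ in terms of $\ell_t, d_t, p$, and the outcome is that the key ratio $\ell_t^2 / d_t$ grows geometrically. After $T$ rounds this ratio is large enough that the asymmetric local lemma produces a proper list-coloring of the remnant, completing the coloring.

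The analytic core of the proof is concentration: after each round, both $|L_v|$ and each color-degree $d_v(c)$ must be close to their conditional means with failure probability polynomial in $\Delta^{-1}$ so that the local lemma can drive the next round. For $|L_v|$, which is essentially a sum of nearly independent color-survival indicators, this is routine. The delicate variable is $d_v(c) = \sum_{\{v,u,w\}\in G} \mathbf{1}[c \in L_u \cap L_w]$, whose summands can be strongly positively correlated in the presence of a local triangle configuration ($C_3$, $F_5$, or $K_4^-$) through $v$: such configurations are precisely what would let a single random choice at one vertex simultaneously change many summands. The triangle-free hypothesis rules them out and thereby bounds the conditional variance and Lipschitz constant of $d_v(c)$ well enough to apply a Talagrand- or Bernstein-type inequality.

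The main obstacle I anticipate is the concentration of $d_v(c)$ in the later rounds, when its expectation has itself shrunk toward a small constant. There a naive bounded-differences argument is too lossy, because a single step of the random process can move $d_v(c)$ by an amount comparable to its mean, and one needs the sort of deviation inequality for nonnegative variables of expectation less than one that is announced in the abstract of the present paper. Once such an inequality is available, the induction passes through all $T$ rounds and the final application of the asymmetric local lemma yields a proper $q$-coloring, as desired.
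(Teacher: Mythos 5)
This statement is a theorem cited from \cite{cmcoloring3}; the present paper does not prove it but invokes it (and its rank-$3$ refinement, Theorem~\ref{trifreer3}) as a black box inside the proof of Theorem~\ref{cortri}. Your outline of a semi-random (R\"odl-nibble) argument in the style of Johansson and Frieze--Mubayi is the right family of techniques and is plausibly what the cited paper does, so the high-level plan is sound. However, what you have written is an outline rather than a proof: the per-round recurrences for $\ell_t$ and $d_t$, the choice of $p$ and of the number of rounds $T$, the quantitative loss at each round, and all of the concentration estimates are asserted but never derived, and the final asymmetric-local-lemma step is only gestured at.

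There is also a concrete misdirection in the plan. You propose to close the hard late-round concentration of $d_v(c)$ with ``the deviation inequality for nonnegative variables with expectation less than one announced in the abstract of the present paper.'' That inequality, Lemma~\ref{hittinglemma}, is not a tail bound for a small-mean sum: it bounds the \emph{transversal number} $\tau(F')$ of a random subhypergraph $F'$, and it is used in Lemma~\ref{partitionlemma} to extract a constant-size hitting set so that each color class can be made $\mathcal{H}$-free. It would not give concentration of $d_v(c)$ around its conditional mean inside a nibble iteration; for that you would still need a Talagrand/Kim--Vu-type argument (exploiting triangle-freeness to control the Lipschitz and variance parameters), which is precisely the step you flag as your ``main obstacle'' and leave open. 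As written, the analytic core of the argument is therefore missing.
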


Alon, Krivelevich, and Sudakov~\cite{aloncoloringsparse}
extended \eqref{johansson} by showing that if every vertex $u \in V(G)$ is in at most $\Delta^2/f$
triangles, then $\chi(G) = O(\frac{\Delta}{\log f})$, where $\Delta \to \infty$.
They used this to show that if $G$ contains no copy of $H$, where $H$ is a fixed graph such that 
$H-u$ is biparite for some $u \in V(H)$, then $\chi(G) = O(\Delta/\log\Delta)$.
In this paper, we give similar improvements to the results of \cite{fmcoloringk} and \cite{cmcoloring3}.

Given two hypergraphs $H$ and $A$, 
a map $\phi:V(H) \to V(A)$ is an isomorphism if for all $E \subset V(H)$,
$\phi(E) \in A$ if and only if $E \in H$.
If there exists an isomorphism $\phi:V(H) \to V(A)$, 
we say $H$ is isomorphic to $A$ and denote this by $H \cong_{\phi} A$.
Given a hypergraph $H$ and $v \in V(H)$, let
\[
\Delta_{H,v}(G) = \max_{u \in V(G)} 
| \{ A \subset G: A \cong_{\phi} H \text{ and } \phi(u)=v \}|
\]
and
\[
\Delta_H(G) = \min_{v \in V(H)} \Delta_{H,v}(G). 
\]
For example, suppose $G$ is a $d$-regular graph, and $H$ is the path with edges $xy$ and $yz$.
Then $\Delta_{H,x}(G) = \Delta_{H,z}(G) = d(d-1),$ while $\Delta_{H,y}(G) = \binom{d}{2}.$
Thus $\Delta_H(G) = \binom{d}{2}$.
Another example appears after the statement of Theorem \ref{corlin}.

We improve the result of \cite{cmcoloring3} with the following theorem.
For a hypergraph $H$, let $v(H) = |V(H)|$.
\begin{thm}\label{cortri}
Let $G$ be a $3$-uniform hypergraph with maximum degree $\Delta$.
Let $\mathcal{T}$ denote the set of $3$-uniform triangles.
If 
\[
\Delta_H(G) \leq \Delta^{(v(H)-1)/2}/f
\]
for all $H \in \mathcal{T}$, then
\[
\chi(G) = O((\frac{\Delta}{\log f})^{1/2}).
\]
\end{thm}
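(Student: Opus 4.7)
The plan is to adapt the semi-random coloring (``nibble'') framework that was used to prove Theorem~\ref{trifree3}, replacing the triangle-freeness hypothesis by the weaker sparsity hypothesis on the three triangle types $C_3$, $F_5$, $K_4^-$. Assign to each vertex $v$ a list $L_v$ of $C = C_0(\Delta/\log f)^{1/2}$ colors, and iterate the following random partial coloring: in each round, every still-uncolored vertex $v$ is activated with some small probability and tentatively receives a color drawn uniformly from its current $L_v$; it keeps that color unless some edge through $v$ becomes monochromatic; finally any color used on the neighborhood of $v$ is deleted from $L_v$. After $O(\log\log\Delta)$ rounds the lists should still be long enough, and the surviving ``conflicts'' sparse enough, that the Asymmetric Local Lemma completes the coloring.

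For each still-uncolored $v$ and each $c \in L_v$, I would track two random variables: the list size $|L_v|$, and the \emph{color-degree} $d_{v,c}$ defined as the number of edges through $v$ in which the other $k-1=2$ vertices would all be colored $c$ if $v$ received $c$. A routine first-moment calculation, identical to the one in~\cite{cmcoloring3}, shows that in each round the ratio $d_{v,c}/|L_v|$ contracts by a constant factor in expectation, so after sufficiently many rounds it drops below the threshold required by the local lemma.

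The real work is concentration of $d_{v,c}$ and $|L_v|$. In the triangle-free setting these variables have bounded dependency and Talagrand's inequality applies cleanly. Once triangles are allowed, two partially-colored edges through $v$ can interact through a third edge, and the contributions of those correlated pairs are governed precisely by $\Delta_{C_3}(G)$, $\Delta_{F_5}(G)$, and $\Delta_{K_4^-}(G)$. The hypothesis $\Delta_H(G) \le \Delta^{(v(H)-1)/2}/f$ is calibrated so that the contribution of these triangle configurations to the relevant second moments is smaller by a factor of $1/f$ than the ``main'' term, which is exactly the slack produced by replacing $\log\Delta$ by $\log f$ in the target bound.

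The main obstacle, and the reason the paper introduces a new inequality, is that after several rounds $\Ex d_{v,c}$ is already tiny, and proving concentration of a positive random variable with mean well below $1$ via Talagrand or Azuma loses a factor of $\sqrt{\Ex X}$ relative to the mean, which is catastrophic. Here I would invoke the deviation inequality for positive random variables with small expectation promised in the abstract, applied to $d_{v,c}$ (with the triangle conditions used to bound the variance inputs of that inequality). Combining these concentration statements with the Asymmetric Local Lemma over the ``bad'' events ``$|L_v|$ is too small'' and ``$d_{v,c}$ is too large'' yields a valid partial coloring at each round, and iterating produces the desired bound $\chi(G) = O((\Delta/\log f)^{1/2})$.
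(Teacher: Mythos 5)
Your proposal takes a genuinely different route from the paper, and the route you chose is in fact one the authors considered and rejected. The paper does \emph{not} re-run a semi-random / nibble coloring on $G$ directly. Instead it proves a general partitioning result (Lemma~\ref{partitionlemma}): a single random coloring with $\Theta(\Delta^{1/(k-1)}/f)$ colors, together with a recoloring-via-transversal step, splits $V(G)$ into that many parts, each of which is \emph{triangle-free} and has small degree. Each part is then handed to the already-known triangle-free theorem (Theorem~\ref{trifreer3}) as a black box. The deduction of Theorem~\ref{cortri} from the partitioning lemma also has two extra wrinkles you would need and don't mention: a preprocessing step that replaces high-codegree pairs by $2$-edges (so the parts end up with small $\Delta_2$ as well), and a case split on $\Delta_2$ vs.\ $(\Delta\log f)^{1/2}$.

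There is also a concrete gap in how you invoke the new deviation inequality. Lemma~\ref{hittinglemma} is not a concentration bound for a small-mean nonnegative random variable that you could apply to a color-degree $d_{v,c}$. It bounds the \emph{transversal number} $\tau(F')$ of the random subhypergraph of surviving copies of $H$ through a vertex $u$, and its whole point is to sidestep the fact that Kim--Vu (and Talagrand) cannot push the count $|F'|$ below $\Omega(\log|V(F)|)$ even when $\Ex|F'|\ll 1$. The conclusion one extracts from it is ``there is a constant-size set of vertices meeting every surviving copy of $H$,'' which is then used to build a constant-degenerate auxiliary graph and recolor, exactly as in Alon--Krivelevich--Sudakov. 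That mechanism has no analogue in your sketch: bounding $\tau$ does not tell you $d_{v,c}$ is small, and $d_{v,c}$ is anyway the wrong object --- the obstruction to be removed is surviving copies of the three triangles, not monochromatic edges per color. Finally, the paper explicitly states that the Frieze--Mubayi-style direct iterative argument could not be easily adapted once linearity/triangle-freeness is dropped; your proposal would have to overcome that obstacle, and it neither identifies it nor addresses it. As written, the proposal does not yield a proof.
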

\noindent
Notice that the hypotheses of Theorem \ref{cortri} are satisfied when $G$ is linear and $f = \Delta^{1/2}$,
so Theorem \ref{cortri} implies Theorem \ref{lineark} for $3$-uniform hypergraphs.

Given a rank $k$ hypergraph $G$ and $A \subset V(G)$, let 
\[
d_j(A) = |\{B \in G: |B|=j, A \subset B\}|
\]
for each $1 \leq j \leq k$. When the hypergraph is $k$-uniform, we write $d(A)$ instead of $d_k(A)$.
For $1 \leq l \leq j \leq k$, define the maximum $(j,l)$-degree of $G$, denoted $\Delta_{j,l}(G)$,
to be $\max_{A \subset V(G): |A| = l}d_j(A)$ and the maximum $j$-degree of $G$ to be $\Delta_{j,1}(G)$.

Consider the random greedy algorithm for forming an independent set $I$ in a hypergraph $G$:
at each step of the algorithm, a vertex $v$ is chosen at random from the set of vertices $V(G)-I$
such that $I \cup \{v\}$ contains no edge of $G$.
The algorithm terminates when no such $v$ exists.
Notice that when the vertex set of $G$ consists of the edges of a complete graph,
and the edges of $G$ correspond to triangles in this graph,
the random greedy independent set algorithm reduces to the triangle-free process 
(see \cite{trifreebohman}, \cite{bohmankeevash13}, \cite{trifreelong13}).
Bennett and Bohman~\cite{benboh} have recently shown that this algorithm terminates with a large independent
set $I$ with high probability, unifying many of the previous results on $H$-free processes.
Define the $b$\emph{-codegree} of a pair of distinct vertices $v,v'$
to be the number of edges $e,e' \in G$ such that $v \in e$, $v' \in e'$, and $|e \cap e'| = b$.
Let $\Gamma_b(G)$ be the maximum $b$-codegree of $G$.
\begin{thm}[Bennett, Bohman \cite{benboh}]
Let $k \geq 2$ and $\epsilon > 0$ be fixed. 
Let $G$ be a $k$-uniform, $D$-regular hypergraph on $N$ vertices such that $D > N^{\epsilon}$.
If
\[
\Delta_{k,l}(G) < D^{\frac{k-l}{k-1}-\epsilon}  \hspace{3pt}\text{ for } l=2,\dots,k-1
\]
and $\Gamma_{k-1}(G) < D^{1-\epsilon}$ then the random greedy independent set algorithm
produces an independent set $I$ in $G$ with 
\[
|I| = \Omega(N ( \frac{\log N}{D} ) ^{\frac{1}{k-1}} )
\]
with probability $1- \exp\{-N^{\Omega(1)}\}$.
\end{thm}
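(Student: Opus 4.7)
The plan is to apply the differential equation method to the random greedy process, in the style of the nibble. Rescale time by setting $t = i D^{1/(k-1)}/N$, so that step $i$ corresponds to a continuous parameter $t$. For each $v \in Q_i$ and each $1 \leq j \leq k-1$, let $Y_j(v,i)$ denote the number of edges $e \ni v$ with $|e \cap I_i| = j-1$ and $e \setminus (I_i \cup \{v\}) \subseteq Q_i$; these are the ``partial edges'' of level $j$ through $v$, and $Y_{k-1}(v,i)/|Q_i|$ is to first order the rate at which $v$ is killed. Heuristically, by $D$-regularity and the codegree hypotheses, $|Q_i|/N$ and $Y_j(v,i)/D^{(k-j)/(k-1)}$ concentrate around deterministic trajectories $q(t)$ and $g_j(t)$ satisfying a coupled system of ODEs: $g_j$ is fed by promotion from level $j-1$ each time $v_i$ lies in an appropriate partial edge, and drains by further promotion and by vertex deaths. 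Integrating the equation for $q$ gives $q(t) = \exp(-\Theta(t^{k-1}))$ with polynomial $g_j(t)$.

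The technical core is concentration. For each tracked variable $X(i) \in \{|Q_i|\} \cup \{Y_j(v,i)\}$, view $X(i) - \Ex[X(i) \mid \mathcal{F}_{i-1}]$ as a martingale difference under the filtration generated by the choices $v_1,\dots,v_i$. The one-step change in $|Q_i|$ is at most $\Gamma_{k-1}(G) < D^{1-\epsilon}$, since at most this many vertices can be eliminated when $v_i$ is added, while the one-step change in $Y_j(v,i)$ is controlled by $\Delta_{k,l}(G)$ for appropriate $l$, and the hypotheses bound these by $D^{(k-l)/(k-1)-\epsilon}$. Feeding these Lipschitz bounds into Freedman's martingale inequality yields that each individual tracked variable deviates from its trajectory by at most a factor of $N^{-\Omega(\epsilon)}$ with failure probability $\exp(-N^{\Omega(1)})$; a union bound over the $\mathrm{poly}(N)$ variables and the $\mathrm{poly}(N)$ time steps preserves the exponentially small failure probability.

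The main obstacle is error propagation: the ODE for $g_j$ depends on $g_{j+1}$, so a deviation at level $j+1$ feeds into the predicted rate at level $j$, and both feed into $q(t)$. The standard remedy is a stopping-time argument: define a single bad event as the first moment any tracked variable exits its predicted window, and apply the martingale estimates only up to that stopping time, so all intermediate bounds remain self-consistent. The $\epsilon$-slack in the codegree hypotheses together with $D > N^\epsilon$ is calibrated exactly so that the compounded error remains $o(1)$ relative to the trajectories throughout the run; without the $\epsilon$, errors at the bottom levels would swamp their predicted values near the end of the process. Running until $q(t) = N^{-\Omega(1)}$, i.e.\ to $T = \Theta((\log N)^{1/(k-1)})$, and noting that $|I|$ equals the number $i_{\mathrm{stop}}$ of steps completed, which by the time rescaling is $T N / D^{1/(k-1)}$, yields $|I| = \Omega(N(\log N/D)^{1/(k-1)})$, as claimed.
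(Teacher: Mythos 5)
This statement is not proved in the paper: it is the Bennett--Bohman theorem, imported verbatim from \cite{benboh} and used only as a point of comparison with Theorem~\ref{corlin}. There is therefore no ``paper's own proof'' to compare your proposal against; I will instead compare it with the known proof in \cite{benboh}. Your outline does capture the right framework --- the differential equations method with time rescaled by $D^{1/(k-1)}/N$, tracking the survival probability $q(t)$ and the ``partial edge'' counts $Y_j(v,i)$, a martingale/Freedman concentration argument, and a stopping time to keep error propagation self-consistent. This is indeed the architecture of Bennett and Bohman's argument, and the final accounting $|I| = i_{\mathrm{stop}} = T\,N/D^{1/(k-1)} = \Omega(N(\log N/D)^{1/(k-1)})$ with $T = \Theta((\log N)^{1/(k-1)})$ is correct.

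There is, however, a concrete gap in the concentration step. You claim that the one-step change in $|Q_i|$ is at most $\Gamma_{k-1}(G) < D^{1-\epsilon}$, ``since at most this many vertices can be eliminated when $v_i$ is added.'' That is not what $\Gamma_{k-1}$ controls. When $v_i$ is added, a vertex $w$ is eliminated precisely when there is an edge $e$ with $v_i,w \in e$ and $e\setminus\{v_i,w\}\subseteq I_{i-1}$, so the number of eliminations is bounded by $Y_{k-1}(v_i,i-1)$ (edges through $v_i$ with $k-2$ vertices already in $I$), not by the $(k-1)$-codegree parameter $\Gamma_{k-1}$. For $k=2$ the discrepancy is stark: the number of vertices killed by adding $v_i$ is simply $\deg(v_i)=D$, while $\Gamma_1 < D^{1-\epsilon}$, so the claimed Lipschitz bound is false. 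Even for $k\geq 3$, $Y_{k-1}(v_i,i-1)$ is only $\Theta(D^{1/(k-1)}\cdot\mathrm{poly}(\log N))$ \emph{along the trajectory} --- it is not an a priori worst-case bound, and getting a usable Lipschitz constant requires conditioning on the stopping time you introduce, not on $\Gamma_{k-1}$. The hypotheses on $\Delta_{k,l}$ and $\Gamma_{k-1}$ enter instead in bounding the expected one-step drift and the higher-order correlations that feed the error terms in the ODE hierarchy; your outline conflates their role with the raw bounded-differences estimate. The overall plan would survive once this is corrected, but as written the justification of the martingale step does not hold.
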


Our next theorem improves and extends Bennett and Bohman's result on the independence number to chromatic number
when $k \geq 3$.
We also weaken the hypothesis by not requiring any condition on $\Gamma_{k-1}(G)$.
Note that an important aspect of \cite{benboh} is that the random greedy procedure
results in a large independent set. We do not make any such claims in our result below.
\begin{thm}\label{corlin}
Fix $k \geq 3$.
Let $G$ be a $k$-uniform hypergraph with maximum degree $\Delta$.
If
\[
\Delta_{k,l}(G) \leq \Delta^{\frac{k-l}{k-1}}/f \hspace{3pt}\text{ for } l = 2,\dots, k-1,
\]
then
\[
\chi(G) = O( (\frac{\Delta}{\log f})^{\frac{1}{k-1}}).
\]
\end{thm}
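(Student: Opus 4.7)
The plan is to adapt the iterated semi-random coloring procedure of Frieze and Mubayi used to prove Theorem \ref{lineark}, substituting the paper's new Simple Concentration Bound wherever linearity was previously exploited. Take a palette $[C]$ of size $C = \lceil c_0 (\Delta/\log f)^{1/(k-1)} \rceil$ for a sufficiently small constant $c_0 > 0$, and for each vertex $v$ maintain a list $L_v \subset [C]$ of still-allowed colors, with $L_v = [C]$ initially. In each round, every uncolored vertex picks a tentative color uniformly from $L_v$; a tentative color at $v$ survives only if it does not complete a monochromatic edge through $v$, and otherwise colors $c$ blocked by external configurations are removed from $L_v$. After $T = O(\log \log \Delta)$ rounds the list-to-effective-degree ratio should be large enough that a single final Local Lemma list-coloring step (as in \cite{fmcoloringk}) completes the coloring.

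The inductive invariants are a lower bound $\ell_t$ on $|L_v|$ and an upper bound $D_t$ on the effective degree at $v$ after round $t$, with the ``entropy ratio'' $\ell_t / D_t^{1/(k-1)}$ preserved up to $(1+o(1))$-factors. The key auxiliary random variable is the threat count $X_{v,c}$: roughly, the number of edges $e \ni v$ all of whose other vertices receive tentative color $c$. The hypothesis $\Delta_{k,l}(G) \leq \Delta^{(k-l)/(k-1)}/f$ controls the dominant covariance among threats sharing $l$ vertices, and with the chosen $C$ produces $\Ex X_{v,c}$ on the order of $\log f$, which is exactly what yields the desired per-round shrinkage and, after iteration, the advertised chromatic bound. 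Dually, one must also track the number of colors removed from $L_v$ in one round and obtain matching lower bounds on $|L_v|$.

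The main obstacle is proving concentration of $X_{v,c}$ and of $|L_v|$. In the linear case only the $l = 2$ contribution matters, and pair-based threats are amenable to martingale or Talagrand bounds. Here the threats decompose by $l \in \{2, \dots, k-1\}$, and within each piece one faces a sum of indicators whose expectation may be below $1$ and whose individual Lipschitz constants are too large relative to the mean for standard Azuma or Talagrand arguments to give the required one-sided bound. This is the precise regime addressed by the paper's new deviation inequality; applying the Simple Concentration Bound to each $l$-indexed piece of $X_{v,c}$ (and to the number of removed colors) is what I expect to be the delicate technical step of the proof.

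Granting concentration, the bad events ``$|L_v|$ falls below $\ell_{t+1}$'' and ``$X_{v,c}$ exceeds a constant multiple of $\Ex X_{v,c}$'', ranging over $v$ and $c \in L_v$, are each exponentially unlikely and depend only on tentative choices within a bounded-radius neighborhood of $v$. The Asymmetric Local Lemma then ensures that with positive probability no bad event occurs in a given round, so the invariants are maintained and we may iterate. After $T$ rounds the remaining list-coloring instance satisfies $|L_v|$ comfortably larger than the effective degree, and a final Local Lemma application colors the leftover, proving the theorem. The crux of the work lies entirely in carrying the concentration step through uniformly in $l$ by means of the new deviation inequality.
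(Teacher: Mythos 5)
Your proposal takes a genuinely different route from the paper, and that route has a gap the paper explicitly chose to avoid. You propose to redo the Frieze--Mubayi iterated semi-random (nibble) coloring from scratch, replacing the linearity-dependent concentration steps with the paper's new deviation inequality applied to threat counts $X_{v,c}$. The paper does not do this. In fact it remarks in Section~\ref{devinequality} that the two-step random process of \cite{fmcoloringk} ``used the linearity of $G$ and the assumption that $H$ is a triangle, and we were not able to easily duplicate their method.'' The actual proof of Theorem~\ref{corlin} is a reduction: by Lemma~\ref{rankkcol} one may assume $f$ is large; then one takes $\mathcal{H}=\{H_2,\dots,H_{k-1}\}$ (pairs of edges sharing $l$ vertices), checks the hypotheses of Lemma~\ref{partitionlemma}, and partitions $V(G)$ into $O(\Delta^{1/(k-1)}/f')$ parts each of which is $\mathcal{H}$-free (hence \emph{linear}) with maximum degree $O(f'^{k-1})$, where $f'=f^{1/(2k-2)}$. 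One then applies Theorem~\ref{lineark} to each part. No new nibble is run at the top level; the known linear theorem is invoked as a black box.

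You have also misidentified the role of the paper's new technical tool. There is no result called ``Simple Concentration Bound'' used anywhere in the paper (the environment is declared but never instantiated); the novel tool is Lemma~\ref{hittinglemma}, and it is not a one-sided tail bound for a sum $X_{v,c}$ of threat indicators. Rather, it bounds the \emph{transversal number} of the random subhypergraph of copies of $H$ through a fixed vertex that fall entirely inside one color class. This is used inside Lemma~\ref{epslemma}: after a random $r$-coloring, one cannot guarantee that each vertex lies in zero surviving copies of $H$ in its color class, but one can guarantee that a bounded-size transversal kills them all; a secondary degeneracy argument then refines the partition into $\mathcal{H}$-free parts. Your plan instead asks for concentration of $X_{v,c}$ in exactly the regime (mean below $1$, Lipschitz constants too large) where, as you yourself note, Azuma/Talagrand fail --- but Lemma~\ref{hittinglemma} does not supply such a tail bound either, so the central concentration step of your nibble remains unproved. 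Until you either supply a genuine tail bound for $X_{v,c}$ in this regime or restructure the argument around transversals as the paper does, the proposal does not establish the theorem.
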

The following example shows that Theorem \ref{cortri} implies Theorem \ref{corlin}
when $k=3$
and also motivates our definition of $\Delta_H$:
Suppose $G$ is a $3$-uniform hypergraph with maximum $3$-degree $\Delta$
and maximum $(3,2)$-degree at most $\Delta^{1/2}/f$.
Recall that $F_5 = \{abc, abd, ced\}$.
Notice $\Delta_{F_5, e} \leq \Delta^2$, while $\Delta_{F_5, a} \leq \Delta^2/f^2$;
thus $\Delta_{F_5} \leq \Delta^2/f^2$. 
We also have $\Delta_{C_3} \leq \Delta^{5/2}/f$ and $\Delta_{K_4^-} \leq 4\Delta^{3/2}/f$.
Theorem \ref{cortri} therefore implies $\chi(G) = O( (\Delta/\log f)^{1/2} )$.

Theorem \ref{cortri} and Theorem \ref{corlin} both follow from a general partitioning lemma, which is
a generalization of the main result of \cite{aloncoloringsparse} to hypergraphs.
\begin{definition}
Let $G$ be a rank $k$ hypergraph.
$G$ is $(\Delta, \omega_2,\dots,\omega_k)$\emph{-sparse} if 
$G$ has maximum $k$-degree at most $\Delta$, and for all
$1 \leq l < j \leq k$, $G$ has maximum $(j,l)$-degree at most $\Delta^{\frac{j-l}{k-1}}\omega_j$.
\end{definition}

\noindent
Recall that a hypergraph $H$ is \emph{connected} if for all $u,v \in V(H)$,
there exists a sequence of edges $e_1, \dots, e_n \in H$ such that $u \in e_1$,
$v \in e_n$, and $e_i \cap e_{i+1} \neq \emptyset$ for $1 \leq i < n$.

\begin{lemma}\label{partitionlemma}
Fix $k \geq 2$.
Let $G$ be a rank $k$ hypergraph, and 
let $\mathcal{H}$ be a finite family of fixed, connected hypergraphs.
Let $f = \Delta^{O(1)}$, where $f$ is sufficiently large.
Suppose that
\begin{itemize}
\item $G$ is $(\Delta, \omega_2, \dots, \omega_k)$-sparse, where $\omega_j = \omega_j(\Delta) = f^{o(1)}$
for all $2 \leq j \leq k$.
\item For all $H \in \mathcal{H}$, $\Delta_H(G) \leq \Delta^{\frac{v(H)-1}{k-1}}/f^{v(H)}$.
\end{itemize}
Then $V(G)$ can be partitioned into $O( \Delta^{\frac{1}{k-1}}/f )$ parts
such that the hypergraph induced by each part is $\mathcal{H}$-free
and has maximum $j$-degree at most $2^{2k}f^{j-1}\omega_j$,
for each $1 \leq j \leq k$.
\end{lemma}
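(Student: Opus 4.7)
The approach is random partitioning in the style of Alon--Krivelevich--Sudakov \cite{aloncoloringsparse}, combined with the asymmetric Lovász Local Lemma. Color each vertex of $G$ independently and uniformly with one of $C = \lceil c_0 \Delta^{1/(k-1)}/f \rceil$ colors, for a sufficiently large absolute constant $c_0$. The goal is to show that with positive probability every color class induces a sub-hypergraph that is simultaneously $\mathcal{H}$-free and has maximum $j$-degree at most $2^{2k}f^{j-1}\omega_j$ for every $j \in \{2,\dots,k\}$. Two families of bad events arise: for each subhypergraph $A$ of $G$ isomorphic to some $H \in \mathcal{H}$ and each color $c$, the event $B_{A,c}$ that every vertex of $A$ receives color $c$; and for each vertex $v$, each $j \in \{2,\dots,k\}$, and each color $c$, the event $D_{v,c,j}$ that the number of $j$-edges through $v$ lying entirely in color class $c$ exceeds $2^{2k}f^{j-1}\omega_j$. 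Ruling out the first family enforces $\mathcal{H}$-freeness, and ruling out the second enforces the degree bound.

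The monochromatic-copy probabilities are clean: $\Pr[B_{A,c}] = C^{-v(H)}$, and the hypothesis $\Delta_H(G) \leq \Delta^{(v(H)-1)/(k-1)}/f^{v(H)}$ also caps how many such events can involve a given vertex. For the degree events, let $X = X_{v,c,j}$ denote the corresponding count; conditioning on $v$ receiving color $c$, we have $\Ex X \leq d_j(v)/C^{j-1} = O(\omega_j f^{j-1})$, which sits only a constant factor below the forbidden threshold. This is the main obstacle: a standard multiplicative Chernoff bound is not available, and the indicators comprising $X$ are correlated through vertices shared among the $j$-edges through $v$. Here I would invoke the deviation inequality for positive random variables of small expectation promised in the abstract (applied to $X$ rescaled to have expectation less than $1$). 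The moment expansion of $\Ex X^r$ is a sum over $r$-tuples of $j$-edges through $v$, each weighted by $C^{-(|B_1 \cup \cdots \cup B_r|-1)}$; the diffuse terms contribute essentially $(\Ex X)^r$, while the dangerous terms are those in which $\{v\} \cup B_1 \cup \cdots \cup B_r$ forms a small connected subhypergraph of $G$. These are precisely the structures pinned down by the $\Delta_H$ hypothesis (for the finitely many connected shapes $H$ that arise) together with the $(j,l)$-degree bounds coming from $(\Delta,\omega_2,\dots,\omega_k)$-sparsity, yielding a tail bound on $\Pr[D_{v,c,j}]$ sharp enough to feed into the Local Lemma.

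Finally, each bad event depends only on the colors of vertices in a bounded-size neighborhood of $v$ or $A$, so its dependency degree is polynomial in $\Delta$; moreover, since $\mathcal{H}$ is finite and connected, the number of copies of any $H \in \mathcal{H}$ through a given vertex is again polynomial in $\Delta$. Choosing LLL weights of geometrically decreasing form separately for the $B_{A,c}$ and the $D_{v,c,j}$, the asymmetric condition $\Pr[E] \leq x_E \prod_{E' \sim E}(1-x_{E'})$ can be verified for every bad event $E$, using that $f = \Delta^{O(1)}$ is sufficiently large and that $\omega_j = f^{o(1)}$ makes every $\omega_j$ negligible compared to any fixed power of $f$. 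The Local Lemma then guarantees positive probability that no bad event occurs, producing a partition of $V(G)$ into $C = O(\Delta^{1/(k-1)}/f)$ parts with all the required properties.
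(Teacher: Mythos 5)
Your plan diverges from the paper's proof in ways that create genuine gaps, and the most serious one is in the $\mathcal{H}$-freeness step.

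You propose bad events $B_{A,c}$ (a fixed copy $A$ of some $H \in \mathcal{H}$ is monochromatic in color $c$) and want the Local Lemma to rule them all out, so that each color class is directly $\mathcal{H}$-free. This does not survive the LLL counting. The hypothesis controls $\Delta_H(G) = \min_{w \in V(H)} \Delta_{H,w}(G)$, which bounds the number of copies of $H$ through a vertex $u$ \emph{at the special position} $v_H$, but it says nothing about $\max_{w} \Delta_{H,w}(G)$, which is what governs the total number of copies of $H$ containing $u$. By the sparsity hypothesis (cf.\ Proposition~\ref{ctlemma}) that total can be as large as $\Theta(\Delta^{(v(H)-1)/(k-1)}\cdot\Delta^{o(1)})$. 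Plugging into the asymmetric LLL condition with $C \asymp \Delta^{1/(k-1)}/f$ and $\Pr[B_{A}] = C^{1-v(H')}$ gives, for a vertex $u$ of $A$,
\[
\sum_{A' \ni u}\Pr[B_{A'}]
\;\gtrsim\;
\Delta^{\frac{v(H')-1}{k-1}}\cdot\Bigl(\tfrac{\Delta^{1/(k-1)}}{f}\Bigr)^{1-v(H')}
\;=\;
f^{\,v(H')-1},
\]
which is a positive power of $f$, not $\le 1/4$. So this family of events cannot be controlled by the LLL, no matter how you tune weights: the dependency budget is polynomial in $\Delta$ while the best per-event probability bound is only $C^{1-v(H')}$. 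This is exactly the obstruction the paper's argument is designed to avoid, and is why the paper does \emph{not} try to make the color classes directly $\mathcal{H}$-free. Instead (Lemma~\ref{epslemma}) it controls the transversal number $\tau(T'_H(u))$ of the sub-hypergraph of copies of $H$ through $u$ \emph{with $u$ at the constrained position $v_H$} (so $|T_H(u)| \le \Delta_H(G)$, which is small), then builds a bounded-degeneracy auxiliary graph $W$ on each color class and colors it with $O(1)$ further colors; the transversals guarantee that the resulting $O(1)$-fold refinement is $\mathcal{H}$-free. Your proposal omits this entire second-stage refinement.

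Two further issues. First, you route the "deviation inequality for positive variables of small expectation" toward the degree count $X_{v,c,j}$, but as you yourself compute, $\Ex X \asymp \omega_j f^{j-1}$ is large, not less than $1$; there is no rescaling that puts it in the regime of Lemma~\ref{hittinglemma}, whose conclusion is anyway a bound on a \emph{transversal number}, not a concentration bound on a count. In the paper, the degree events $A_{u,j}$ are handled by Kim--Vu (Theorem~\ref{kimvu}), and the transversal lemma is reserved for the events $B_{u,H}$. Second, you partition in one shot with $C = O(\Delta^{1/(k-1)}/f)$ colors, but when $f$ is small (say $f$ polylogarithmic in $\Delta$), the one-shot concentration bounds (Kim--Vu with $\lambda = \Theta(\log\Delta)$) no longer give failure probabilities that beat the polynomial-in-$\Delta$ dependency degree, because $2^{2k}f^{j-1}\omega_j$ is only a constant factor above the mean. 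The paper handles this by first running the random-halving recursion (Lemma~\ref{halflemma} with Proposition~\ref{seqclaim}) to reduce to the regime $f \ge \Delta^\epsilon$ covered by Lemma~\ref{epslemma}; your plan has no analogue of this step and would need one.
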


Our main new contribution is a technical lemma (Lemma \ref{hittinglemma} in Section \ref{devinequality})
that allows us to prove a large deviation inequality for positive random variables
with mean less than $1$. This may be of independent interest.

\subsection{Organization}
In Section 2, we use Lemma \ref{partitionlemma} to prove Theorems \ref{cortri} and \ref{corlin}.
The remaining sections are devoted to Lemma \ref{partitionlemma}. 
In Section 3, we state two concentration theorems that we will use in Section 5.
In Section 4, we prove our main probabilistic lemma, which is also used in Section 5. 
Section 5 contains the proof of Lemma \ref{partitionlemma}.

\subsection{Notation}\label{notation}
All of our $O$ and $o$ notation is with respect to the parameter $\Delta \to \infty$.
In particular, when we write $f = O(g)$, we mean that there is a positive constant $c$
such that $f < cg$.

Given a hypergraph $G$ and $A \subset V(G)$, we define the $(j-|A|)$-uniform hypergraph
\[
L_{G, j}(A) = \{ B-A: B \in G \text{ with } A \subset B \text{ and } |B| = j\}.
\]
Let $N_{G,j}(A) = V(L_{G,j}(A))$.
When $G$ is $k$-uniform, we drop the subscript $j$. When $G$ is clear from the context,
we drop the subscript $G$.

\section{Proof of Theorem \ref{cortri} and \ref{corlin}}
We first prove a simple bound on the chromatic number of general rank $k$ hypergraphs.
The proof is based on a proof of Erd\H{o}s and Lov\'asz~\cite{erdoslovasz} for $k$-uniform hypergraphs.
This bound will allow us to assume that $f$ is sufficiently large in the proofs
of Theorems \ref{cortri} and \ref{corlin}.
\subsection{Chromatic number of rank $k$ hypergraphs}
We will need the following version of the local lemma.

\begin{thmall}[\cite{molloyreed}]\label{asyll}
Consider a set $\mathcal{E} = \{A_1, \dots, A_n\}$ of (typically bad) events that such each $A_i$ is mutually
independent of $\mathcal{E}-(\mathcal{D}_i \cup A_i)$, for some $\mathcal{D}_i \subset \mathcal{E}$. If for each $1 \leq i \leq n$
\begin{itemize}
	\item $\Pr[A_i] \leq 1/4$, and
	\item $\sum_{A_j \in \mathcal{D}_i} \Pr[A_j] \leq 1/4$,
\end{itemize}
then with positive probability, none of the events in $\mathcal{E}$ occur.
\end{thmall}

\begin{lemma}\label{rankkcol}
Fix $k \geq 2$. Let $G$ be a rank $k$ hypergraph with maximum $j$-degree $\Delta_j$,
$j = 2,\dots,k$. Then
\[
\chi(G) \leq \max_{j=2}^k (4k(k-1)\Delta_j)^{1/(j-1)}.
\]
\end{lemma}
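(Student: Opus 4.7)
The plan is to apply the asymmetric local lemma to a uniform random coloring, in the spirit of the Erd\H{o}s--Lov\'asz proof cited just before the statement. Set $C = \lceil \max_{j=2}^{k} (4k(k-1)\Delta_j)^{1/(j-1)} \rceil$, color each vertex of $G$ independently and uniformly from a palette of $C$ colors, and for each edge $e \in G$ let $A_e$ be the event that $e$ is monochromatic. A direct count gives $\Pr[A_e] = C^{1-|e|}$, since among the $C^{|e|}$ equally likely colorings of $e$ exactly $C$ are monochromatic. Because $A_e$ depends only on the colors assigned to vertices of $e$, it is mutually independent of $\{A_{e'} : e' \cap e = \emptyset\}$, so we may take $\mathcal{D}_e = \{A_{e'} : e' \neq e,\ e' \cap e \neq \emptyset\}$ as its dependency set.

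The next step is to verify the two hypotheses of the asymmetric local lemma. For the first, the choice of $C$ forces $C^{j-1} \geq 4k(k-1)\Delta_j \geq 4$ for the smallest edge size $j_0 \geq 2$ that actually occurs (using $k(k-1) \geq 2$ and $\Delta_{j_0} \geq 1$), so $\Pr[A_e] = C^{1-|e|} \leq C^{1-j_0} \leq 1/4$. For the second, grouping the edges counted by $\mathcal{D}_e$ according to the vertex of $e$ they hit and their size yields
\[
\sum_{A_{e'} \in \mathcal{D}_e} \Pr[A_{e'}] \;\leq\; \sum_{v \in e}\sum_{j=2}^{k} \Delta_j \cdot C^{1-j} \;\leq\; |e|\sum_{j=2}^{k} \frac{1}{4k(k-1)} \;\leq\; k(k-1)\cdot \frac{1}{4k(k-1)} \;=\; \frac{1}{4},
\]
using $|e| \leq k$, $C^{j-1} \geq 4k(k-1)\Delta_j$ for each $j$, and the fact that the inner sum has $k-1$ terms. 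The asymmetric local lemma then gives positive probability that none of the $A_e$ occur, producing a proper coloring of $G$ with at most $C = O(\max_j (\Delta_j)^{1/(j-1)})$ colors, which is the claimed bound.

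There is essentially no hard step here; the argument is routine once the double-counting is set up. The main points requiring care are (i) bounding the dependency sum by splitting over vertex and edge-size rather than treating all neighboring events uniformly, (ii) using $|e| \leq k$ to factor the outer sum, and (iii) handling the degenerate situation in which $G$ has no edges of size two so that the base bound $\Pr[A_e] \leq 1/4$ must be verified via a larger edge; the choice $C^{j_0-1} \geq 4k(k-1) \geq 4$ covers this case.
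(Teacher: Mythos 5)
Your proof is correct and follows essentially the same route as the paper's: pick a random coloring with $r \approx \max_j (4k(k-1)\Delta_j)^{1/(j-1)}$ colors, let the bad events be the monochromatic edges, bound the dependency sum by splitting neighboring edges according to their size and the shared vertex, and invoke the asymmetric local lemma. You are in fact slightly more careful than the paper, which skips the explicit check that $\Pr[A_e] \leq 1/4$; your observation that $C^{|e|-1} \geq 4k(k-1)\Delta_{|e|} \geq 4$ whenever an edge of size $|e|$ exists closes that small gap.
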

\begin{proof}
Set 
\[
r=\max_{j=2}^k (4k(k-1)\Delta_j)^{1/(j-1)}.
\]
Then $\Delta_j \leq \frac{r^{j-1}}{4k(k-1)}$ for each $j = 2, \dots, k$.
Assign each vertex $u \in V(G)$ a color chosen uniformy at random from $[r]$.
For each edge $e \in G$, let $B_e$ denote the event that each vertex in $e$ received
the same color. Then $\Pr[B_e] = r^{1-|e|}$. The event $B_e$ depends only on events $B_f$
such that $B_e \cap B_f \neq \emptyset$.
Since
\[
\sum_{f \in H: e \cap f \neq \emptyset}\Pr[B_f] 
= \sum_{j=2}^k \sum_{\substack{f \in H: e \cap f \neq \emptyset, \\ |f|=j}}\Pr[B_f]
\leq \sum_{j=2}^k |e|\Delta_j r^{1-j}
\leq 1/4,
\]
the Asymmetric Local Lemma implies that there exists a coloring where no event $B_e$ occurs.
\end{proof}

\subsection{Proof of Theorem \ref{cortri}}
We will use the following stronger version of Theorem \ref{trifree3}.
\begin{thm}[\cite{cmcoloring3}]\label{trifreer3}
Suppose $G$ is a rank $3$, triangle-free hypergraph with maximum
$3$-degree $\Delta$ and maximum $2$-degree $\Delta_2$. Then
\[
\chi(G) = O( \max\{ (\frac{\Delta}{\log \Delta})^{1/2}, \frac{\Delta_2}{\log \Delta_2} \}).
\]
\end{thm}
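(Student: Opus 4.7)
The plan is to adapt the semi-random (nibble) coloring method of \cite{johansson, fmcoloringk} to the rank~$3$ setting, in which edges of sizes~$2$ and~$3$ must be handled simultaneously. Set
\[
C \;=\; c \cdot \max\!\Bigl\{\,\bigl(\Delta/\log\Delta\bigr)^{1/2},\ \Delta_2/\log\Delta_2\,\Bigr\}
\]
for a large absolute constant $c$, and aim to produce a proper coloring with palette $[C]$. Initialize $L(v) = [C]$ at every vertex. In each round, every vertex tentatively picks a uniformly random color from $L(v)$ and activates it if doing so creates no monochromatic edge of either uniformity; activated colors are then deleted from the lists of vertices sharing an edge with $v$.

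The bookkeeping tracks, for every vertex $v$ and color $\alpha \in L(v)$, three quantities: the list size $l(v) = |L(v)|$, the 2-edge count $d_2(v,\alpha) = |\{u \in N_2(v) : \alpha \in L(u)\}|$, and the 3-edge count $d_3(v,\alpha) = |\{\{u,w\} : \{v,u,w\} \in G,\ \alpha \in L(u) \cap L(w)\}|$. The target invariants are $l(v)^2 \gtrsim d_3(v,\alpha)\log\Delta$ and $l(v) \gtrsim d_2(v,\alpha)\log\Delta_2$, which together match the two terms inside the maximum defining $C$. A standard expectation calculation shows these invariants propagate round by round: the expected list shrinkage is a factor $(1-p)$ for an activation probability $p$, while for a fixed 3-edge $\{v,u,w\}$ the color $\alpha$ survives at both $u$ and $w$ with probability roughly $(1-p)^2$, so $d_3$ shrinks like the \emph{square} of $l$. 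The 2-edge analysis mirrors Johansson's graph argument.

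After a suitable number of rounds the lists are polylogarithmic in $\Delta$ while still dominating $d_2$ and $d_3$ by a $\log$ factor, so the asymmetric Local Lemma (Theorem~\ref{asyll}) finishes. The main obstacle — and the place where the triangle-free hypothesis is essential — is the concentration of $d_3(v,\alpha)$. If two 3-edges through $v$ could share a second vertex (i.e.\ if a $K_4^-$ triangle were present), the ``color $\alpha$ survives at $u$'' events across different edges through $v$ would be strongly correlated, inflating the variance past what Azuma/Talagrand-style inequalities can absorb; the $C_3$ and $F_5$ triangles produce analogous bad correlations across pairs of edges. Triangle-freeness decouples these survival events, and combined with the concentration tools of Section~3 together with the new small-mean deviation inequality (Lemma~\ref{hittinglemma}) this is enough to drive all three quantities $l$, $d_2$, $d_3$ through the iteration with probability tending to $1$, at which point the Local Lemma applies.
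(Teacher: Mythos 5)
The paper does not prove Theorem~\ref{trifreer3}: it is quoted from \cite{cmcoloring3} and used as a black box in the proof of Theorem~\ref{cortri}, so there is no in-paper proof to compare your proposal against. With that caveat, two remarks on your sketch.

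You route Lemma~\ref{hittinglemma} into the nibble as a concentration tool for $d_3(v,\alpha)$, and this reverses the paper's logical flow. Lemma~\ref{hittinglemma} is the \emph{new} contribution of the present paper; it feeds into Lemma~\ref{epslemma} and thence Lemma~\ref{partitionlemma}, which in turn hands sparse, triangle-free pieces to the already-known Theorem~\ref{trifreer3}. It is also the wrong tool for the job you assign it: Lemma~\ref{hittinglemma} controls the transversal number of a hypergraph of events whose total mean is below~$1$, whereas the concentration difficulty in a semi-random nibble concerns quantities like $d_3(v,\alpha)$ with polynomially large means, for which the Section~3 machinery (Theorems~\ref{mcdiarmid} and~\ref{kimvu}) is the appropriate apparatus. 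Invoking \ref{hittinglemma} to prove the theorem that the paper takes as an \emph{input} is circular in the context of this paper's architecture.

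Setting that aside, the outline is in the right genre --- lists of size $C$, tracking $l(v)$, $d_2(v,\alpha)$, $d_3(v,\alpha)$, triangle-freeness decorrelating the survival events at the two non-$v$ vertices of a $3$-edge --- and this does match in spirit how \cite{fmcoloringk,cmcoloring3} proceed. But the invariants as stated are inconsistent at round zero: with $C \approx (\Delta/\log\Delta)^{1/2}$ one has $l^2 \approx \Delta/\log\Delta$ while $d_3 \approx \Delta$, so $l^2 \gtrsim d_3\log\Delta$ fails initially by a $\log^2\Delta$ factor. The actual argument must show that $d_3/l^2$ (and $d_2/l$) \emph{shrinks} across rounds until the Erd\H{o}s--Lov\'asz threshold of Lemma~\ref{rankkcol} is reached, and the entire difficulty is the deferred concentration analysis; as written this is a plan rather than a proof. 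None of that matters for the present paper, though, which simply cites the result.
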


\noindent
In fact, we will prove the following stronger result.
\begin{thm}
Let $G$ be a rank $3$ hypergraph with maximum $3$-degree at most $\Delta$
and maximum $2$-degree at most $\Delta_2$.
Let $\mathcal{H}$ denote the family of rank $3$ triangles.
If 
\[
\Delta_H(G) \leq \max\{\Delta^{1/2}, \Delta_2\}^{v(H)-1}/f
\]
for all $H \in \mathcal{H}$, then
\[
\chi(G) = O(\max \{ \frac{\Delta}{\log f})^{1/2} , \frac{\Delta_2}{\log f}  \}).
\]
\end{thm}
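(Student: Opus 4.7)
The plan is to follow the same route as the proof of Theorem \ref{cortri}: use the partitioning Lemma \ref{partitionlemma} to split $V(G)$ into triangle-free parts with bounded $2$- and $3$-degrees, and then color each part via Theorem \ref{trifreer3} and combine using disjoint color palettes. The main adjustment, since $\Delta_2$ may now exceed $\Delta^{1/2}$, is to use an effective degree parameter $\tilde\Delta := \max\{\Delta,\Delta_2^2\}$, so that $\tilde\Delta^{1/2} = \max\{\Delta^{1/2},\Delta_2\}$ matches the quantity appearing in the triangle hypothesis. A preliminary application of Lemma \ref{rankkcol} yields $\chi(G) = O(\max\{\Delta^{1/2},\Delta_2\})$, which already matches the claim when $\log f = O(1)$, so we may assume $f$ is sufficiently large.

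First I would verify that $G$ is $(\tilde\Delta,\omega_2,\omega_3)$-sparse with $\omega_j = f^{o(1)}$. Since $\Delta_2 \le \tilde\Delta^{1/2}$, the $(2,1)$-degree bound holds with $\omega_2 = O(1)$. The $(3,2)$-codegree bound is extracted from the $K_4^-$ case of the hypothesis: a pair of vertices with many common $3$-edges would force too many $K_4^-$ copies centered at that pair. With $f' := f^{1/c}$ for a constant $c$ exceeding $\max_{H\in\mathcal{H}} v(H)$, the hypothesis $\Delta_H(G) \le \tilde\Delta^{(v(H)-1)/2}/(f')^{v(H)}$ of Lemma \ref{partitionlemma} follows from ours. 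Applying the lemma partitions $V(G)$ into $N = O(\tilde\Delta^{1/2}/f')$ triangle-free parts, each with max $3$-degree $O((f')^2 f^{o(1)})$ and max $2$-degree $O(f' f^{o(1)})$. Feeding these into Theorem \ref{trifreer3} and multiplying by $N$ yields a bound of order $\tilde\Delta^{1/2}/\sqrt{\log f}$.

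The main obstacle is sharpening this bound to obtain the $\Delta_2/\log f$ term of the claim when $\Delta_2 > \Delta^{1/2}$: the straightforward combination above only produces $\Delta_2/\sqrt{\log f}$, off by a factor of $\sqrt{\log f}$. To close this gap I would treat the $2$-edge subgraph $G_2$ separately, applying the Alon--Krivelevich--Sudakov theorem \cite{aloncoloringsparse} to $G_2$ (with the $2$-edge-triangle case of the hypothesis supplying the triangle-density condition) to color it with $O(\Delta_2/\log f)$ colors. The $3$-edges are then handled within each resulting color class by a refined application of Theorem \ref{cortri}: restricting to an independent set in $G_2$ reduces the effective $3$-degree and triangle counts enough to need only a constant number of extra colors in the regime $\Delta \ll \Delta_2^2/\log f$, while in the complementary regime the bound meshes with the $(\Delta/\log f)^{1/2}$ term produced by the main partitioning argument.
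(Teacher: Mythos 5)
Your general strategy (partition via Lemma~\ref{partitionlemma}, then color each part with Theorem~\ref{trifreer3}) matches the paper's, but there are two gaps, one of which is a genuine error.

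The first issue is your claim that the maximum $(3,2)$-degree can be ``extracted from the $K_4^-$ case of the hypothesis.'' This is false: a pair with high codegree does not force any triangles. Consider the sunflower $G = \{abc_1,\dots,abc_d\}$ with all $c_i$ distinct. The pair $\{a,b\}$ has codegree $d$, but $G$ contains no triangle of any of the three types (two edges $abc_i, abc_j$ share only the pair $\{a,b\}$, and no third edge closes up a $C_3$, $F_5$, or $K_4^-$). So the triangle hypothesis puts no constraint at all on the $(3,2)$-degree, and without a codegree bound you cannot verify that $G$ is $(\tilde\Delta,\omega_2,\omega_3)$-sparse. The paper gets around this not by inferring a codegree bound, but by changing the hypergraph: it sets $K = \{A : |A|=2,\ d_3(A)\ge\Delta^{1/2}\}$ and passes to the rank-$3$ hypergraph $G' = G - \{E\in G : A\subset E \text{ for some } A\in K\}\cup K$, replacing all $3$-edges through a high-codegree pair by a single $2$-edge on that pair. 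Any proper coloring of $G'$ is proper for $G$, $G'$ automatically has $(3,2)$-degree below $\Delta^{1/2}$, and one verifies that the $\Delta_H$ bounds transfer to $G'$ because each copy of a triangle in $G'$ that is not in $G$ corresponds to many copies of a larger triangle in $G$. This construction is the crux of Case~1, and there is no way around it by appealing to the triangle hypothesis alone.

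The second issue is the handling of the regime $\Delta_2 > \Delta^{1/2}$. You correctly diagnose that the naive choice $\tilde\Delta = \max\{\Delta,\Delta_2^2\}$ leaves you a factor $\sqrt{\log f}$ short, but the fix you sketch (color the $2$-edge subgraph $G_2$ with Alon--Krivelevich--Sudakov, then handle $3$-edges inside each $G_2$-color class) does not close the gap: an independent set in $G_2$ need not reduce the $3$-degree at all, since a $3$-edge $uvw$ with none of $uv,vw,wu$ a $2$-edge survives intact. The paper's resolution is much simpler and purely a change of parameter: set $\Delta' := \Delta_2^2/\log f$. Since $\Delta < \Delta'$ in this regime, $G$ has maximum $3$-degree at most $\Delta'$ and the triangle hypothesis still holds with $\Delta'$ in place of $\Delta$ (because $\max\{\Delta'^{1/2},\Delta_2\}\ge\max\{\Delta^{1/2},\Delta_2\}$). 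Moreover $\Delta_2 = (\Delta'\log f)^{1/2}$, so Case~1 applies to $\Delta'$ and produces $O((\Delta'/\log f)^{1/2}) = O(\Delta_2/\log f)$ colors directly. No separate treatment of $G_2$ and no ``refined'' recursion is needed.
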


\begin{proof}
By Lemma \ref{rankkcol}, we may assume that $f$ is sufficiently large.
We consider two cases.

\noindent
\textbf{Case 1:} $\Delta_{2} \leq (\Delta \log f)^{1/2}$. 

\noindent
Set $g = \frac{f}{\log^{5/2}f}$.
Since $\Delta_2 \leq (\Delta \log f)^{1/2}$ and $v(H) \leq 6$ for all $H \in \mathcal{H}$,
\begin{equation}\label{dhbound}
\Delta_H(G) 
\leq \max \{\Delta^{1/2}, \Delta_2\}^{v(H)-1}/f
\leq (\Delta \log f)^{\frac{v(H)-1}{2}}/f
\leq \Delta^{\frac{v(H)-1}{2}}/g
\end{equation}
for each $H \in \mathcal{H}$.

Let
\[
K = \{A \subset V(G): |A| = 2 \text{ and } d_3(A) \geq \Delta^{1/2}\}.
\]
Define the rank $3$ hypergraph
\[
G' = G - \{E \in G: A \subset E \text{ for some } A \in K\} \cup K.
\]
In other words, if a pair of vertices has high codegree, then we replace all $3$-edges
containing the pair with a $2$-edge between the pair.
Hence any proper coloring of $G'$ is also
a proper coloring of $G$. 

We will now apply Lemma \ref{partitionlemma} to $G'$ with the function $g' := g^{1/6}$.
Set $\omega_2 = \log^{1/2}g$ and $\omega_3 = 1$.
We first check that $G'$ is $(2\Delta, \omega_2, \omega_3)$-sparse.
Note first that $\omega_2 = g'^{o(1)}$, $\omega_3 = g'^{o(1)}$, and
$G'$ has maximum $3$-degree at most $2\Delta$.
Also, by definition of $K$, 
$G'$ has maximum $(3,2)$-degree at most $\Delta^{1/2} < (2\Delta)^{1/2}\omega_3$.
Let $u \in V(G)$. Since $|L_{G,3}(u)| \leq \Delta$,
$u$ is in at most $2\Delta^{1/2}$ sets in $K$. Thus, 
$G'$ has maximum $(2,1)$-degree at most 
\[
(\Delta \log f)^{1/2} + 2\Delta^{1/2} < (2\Delta\log g)^{1/2} = (2\Delta)^{1/2}\omega_2,
\]
which shows that $G'$ is $(2\Delta, \omega_2, \omega_3)$-sparse.

Let $H' \in \mathcal{H}$. Suppose $T' \subset G'$ is a copy of $H'$
which is not in $G$. Since each $e \in T'-G$ corresponds to at least $\Delta^{1/2}$
edges of size $3$ in $G$, $T'$ corresponds to at least $(\Delta^{1/2}-5)^{|T'-G|}$ copies of $H$
in $G$, where $H$ is the triangle obtained by replacing each $e \in T'-G$
with a distinct size $3$ edge containing $e$ and some vertex outside of $T'$.
Thus 
\[
\Delta_{H'}(G')(\Delta^{1/2}-5)^{v(H)-v(H')} 
%= \Delta_{H'}(G')(\Delta^{1/2}-5)^{v(H)-v(H')}
 \leq \Delta_H(G).
\]
Using \eqref{dhbound}, this implies 
\[
\Delta_{H'}(G') 
\leq 2\Delta_H(G)\Delta^{(v(H')-v(H))/2}
\leq 2\Delta^{(v(H')-1)/2}/g
\leq 2\Delta^{(v(H')-1)/2}/(g^{1/6})^{v(H')}.
\]

Lemma \ref{partitionlemma} therefore implies
that there exists a partition of $V(G')=V(G)$ into $O(\Delta^{1/2}/g')$ parts such that
each part is $\mathcal{H}$-free and has maximum $j$-degree at most $O(g'^{j-1}\omega_j)$.
By Theorem \ref{trifreer3}, we may properly color each part with
\[
O(\max\{ \frac{g'}{\log^{1/2} g'}, \frac{g' \omega_2}{\log (g' \omega_2)}\})
= O(\frac{g'}{\log^{1/2} g'})
\]
different colors, resulting in a total of 
$O((\frac{\Delta}{\log g'})^{1/2}) = O((\frac{\Delta}{\log f})^{1/2})$ 
colors.

\noindent \textbf{Case 2:} $\Delta_{2} > (\Delta \log f)^{1/2}$.

\noindent
Set $\Delta' = \frac{\Delta_{2}^2}{\log f}$.
Then $\Delta < \Delta'$. Thus $G$ has maximum $3$-degree at most $\Delta'$, and
\[
\Delta_H(G) \leq \max\{\Delta^{1/2}, \Delta_2\}^{v(H)-1}/f < \max\{\Delta'^{1/2}, \Delta_2\}^{v(H)-1}/f
\]
for all $H \in \mathcal{H}$.
We may therefore
apply case 1 with $\Delta'$ in the role of $\Delta$
to obtain a coloring with at most
$
O((\frac{\Delta'}{\log f})^{1/2})
= O(\frac{\Delta_{2}}{\log f})
$
colors.
\end{proof}

\subsection{Proof of Theorem \ref{corlin}}
Let $G$ be a $k$-uniform hypergraph with maximum degree $\Delta$ such that
$\Delta_{k,l}(G) \leq \Delta^{\frac{k-l}{k-1}}/f$ for $l=2,\dots,k-1$.
As in the proof of Theorem \ref{trifreer3}, we may assume that $f$ is sufficiently large.
Set $\omega_j = 1$ for $2 \leq j \leq k$.
Observe that $G$ is $(\Delta, \omega_2, \dots, \omega_k)$-sparse.
For each $l = 2, \dots, k-1$, let
$H_l$ be the $k$-uniform hypergraph consisting of two edges which share exactly $l$ vertices.
Set $\mathcal{H} = \{H_2, \dots, H_{k-1}\}$ and $f' = f^{1/(2k-2)}$.
Then for $a \in V(H_l)$ with $d_{H_l}(a)=2$,
\[
\Delta_{H_l} 
\leq \Delta_{H_l, a} 
\leq \Delta \binom{k-1}{l-1} \Delta_{k,l}
\leq \binom{k-1}{l-1}\Delta^{\frac{2k-1-l}{k-1}}/f
\leq \binom{k-1}{l-1}\Delta^{\frac{2k-1-l}{k-1}}/f'^{v(H_l)},
\]
so Lemma \ref{partitionlemma} implies that there exists a partition
of $V(G)$ into $O(\Delta^{\frac{1}{k-1}}/f')$ parts such that each part is $\mathcal{H}$-free
and has maximum degree at most $O(f'^{k-1})$. Since each part is $\mathcal{H}$-free,
each part is linear and can be colored with $O( \frac{f'}{\log^{1/(k-1)}f'} )$ colors
by Theorem \ref{lineark}.
Using a different set of colors for each part gives a proper coloring
of $G$ with $O((\frac{\Delta}{\log f})^{\frac{1}{k-1}})$ colors.

\section{Concentration results}
\begin{thm}[McDiarmid \cite{mcdiarmid}]\label{mcdiarmid}
For $i=1,\dots,n$, let $\Omega_i$ be a probability space, and let $\Omega = \prod_{i=1}^n\Omega_i$.
For each $i$, let $X_i:\Omega_i \to\ \mathcal{A}_i$ be a random variable,
and let $f:\prod\mathcal{A}_i \to \mathbb{R}$.
Suppose that $f$ satisfies $|f(x)-f(x')| \leq c_i$
whenever the vectors $x$ and $x'$ differ only in the $i^{th}$ coordinate.
Let $Y:\Omega \to \mathbb{R}$ be the random variable $f(X_1,\dots,X_n)$. Then for any $t > 0$,
\[
\Pr[|Y - \Ex[Y]| > t) \leq 2e^{-2t^2 / \sum_{i=1}^n c_i^2}
\]
\end{thm}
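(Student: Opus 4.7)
The plan is to prove McDiarmid's inequality via the standard Doob-martingale decomposition combined with a conditional Hoeffding lemma, capped off with a Chernoff-Markov bound. First I would set up the Doob martingale $Y_i = \Ex[Y \mid X_1,\dots,X_i]$ for $i=0,1,\dots,n$, so that $Y_0 = \Ex[Y]$ and $Y_n = Y$, and express the deviation $Y - \Ex[Y]$ as the telescoping sum $\sum_{i=1}^n D_i$ where $D_i = Y_i - Y_{i-1}$.

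The key technical step, and the one place where the product structure of $\Omega$ is essential, is to show that conditional on $X_1 = x_1,\dots,X_{i-1} = x_{i-1}$ the random variable $D_i$ has range at most $c_i$. Because the coordinates are independent, I can write $D_i = g_i(X_i) - \Ex[g_i(X_i)]$, where $g_i(x) = \Ex[f(x_1,\dots,x_{i-1},x,X_{i+1},\dots,X_n)]$, and the bounded-differences hypothesis on $f$ immediately gives $|g_i(x) - g_i(x')| \leq c_i$ by pushing the difference through the expectation over $(X_{i+1},\dots,X_n)$ (whose distribution does not depend on $X_i$ or $X_i'$).

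Next I would invoke Hoeffding's lemma: any mean-zero random variable $Z$ with range inside an interval of length $c$ satisfies $\Ex[e^{\lambda Z}] \leq e^{\lambda^2 c^2/8}$ for all real $\lambda$. Applied conditionally to each $D_i$ and chained through the tower property, this yields $\Ex[\exp(\lambda(Y - \Ex[Y]))] \leq \exp(\lambda^2 \sum_i c_i^2/8)$. A standard Markov bound with the optimal choice $\lambda = 4t/\sum_i c_i^2$ then gives $\Pr[Y - \Ex[Y] > t] \leq e^{-2t^2/\sum_i c_i^2}$; the matching lower tail follows by running the same argument on $-f$, yielding the factor of $2$ in the statement.

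The main obstacle is the conditional-range step in the second paragraph: Hoeffding's lemma and the Chernoff optimization are routine, but the bounded-difference property of $f$ only translates cleanly into a bound on the conditional range of $D_i$ because the $X_j$ are independent, so the inner expectation defining $g_i$ is insensitive to $X_i$. Proving Hoeffding's lemma itself is a short convexity argument (dominate $e^{\lambda z}$ on the interval by its secant line and bound the resulting cumulant generating function), which I would cite rather than reproduce.
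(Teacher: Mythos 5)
The paper does not prove this theorem; it is quoted from \cite{mcdiarmid} and used as a black box. Your sketch is the canonical proof of the bounded-differences inequality (Doob martingale, conditional Hoeffding lemma via independence of the coordinates, Chernoff bound with $\lambda = 4t/\sum_i c_i^2$), and the steps and the identification of the one nontrivial point (that independence is what lets the bounded-difference hypothesis on $f$ transfer to a bound on the conditional range of the martingale increments) are all correct; there is nothing in the paper to compare it against.
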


\begin{thm}[Kim-Vu \cite{kimvu}]\label{kimvu}
Suppose $F$ is a hypergraph such that $|f| \leq s$ for all $f \in F$. Let
\[
F' = \{f \in F: z_i = 1 \text{ for all } i \in f\},
\]
where the $z_i$, $i \in V(F)$ are independent random variables taking values in $[0, 1]$.
For $A \subset W$ with $|A| \leq s$, let
\[
	Z_A = \sum_{f \in F: f \supset A} \prod_{i \in f-A}z_i.
\]
Let $M_A = \Ex[Z_A]$ and $M_j = \max_{A: |A| \geq j}M_A$ for $j \geq 0$. Then there exist
positive constants $a = a(s)$ and $b = b(s)$ such that for any $\lambda > 0$, 
\[
\Pr\left[| |F'|-\Ex[|F'|]| \geq a\lambda^s\sqrt{M_0 M_1}\right] \leq b|V(F)|^{s-1}e^{-\lambda}.
\]
\end{thm}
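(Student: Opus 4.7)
The plan is to prove the tail bound by induction on $s$. I would identify $Y := |F'| = \sum_{f \in F} \prod_{i \in f} z_i$ as a multilinear polynomial of degree at most $s$ in the independent variables $z_i \in [0,1]$, and note that $Z_A = \sum_{f \supset A} \prod_{i \in f \setminus A} z_i$ is precisely the partial derivative $\partial_A Y$. Hence $M_0 = \Ex Y$, and $M_j$ bounds the expectation of every partial derivative of order at least $j$. The base case $s=1$ is immediate from Hoeffding's inequality applied to the sum of independent bounded variables $Y$, whose variance is at most $\Ex Y = M_0$ and for which $M_1 \leq 1$.

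For the inductive step, I would expose the variables $z_1,\dots,z_n$ sequentially and analyze the Doob martingale $Y_j = \Ex[Y \mid z_1,\dots,z_j]$ with increments $D_j = Y_j - Y_{j-1}$. Because $Y$ is linear in each $z_j$, $|D_j|$ is controlled by the value of the first-order partial derivative $\partial_j Y$, which is itself a nonnegative multilinear polynomial of degree at most $s-1$ with expectation at most $M_1$ and whose own partial derivatives are governed by $M_2,\dots,M_s$. Applying the inductive hypothesis to each $\partial_j Y$ and union-bounding over $j$ shows that with probability at least $1 - O(n^{s-1} e^{-\lambda})$, every $|D_j|$ is of order $M_1 + \lambda^{s-1} \sqrt{M_1 M_2} = O(\lambda^{s-1} M_1)$ uniformly in $j$ (using $M_2 \leq M_1$).

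On this high-probability event I would apply Freedman's martingale inequality to $(Y_j)$, bounding the predictable quadratic variation by $\sum_j \Ex[D_j^2 \mid \mathcal{F}_{j-1}] \leq (\max_j |D_j|) \sum_j \Ex|D_j| \leq (\max_j |D_j|)\, M_0$, where the final step uses nonnegativity of $Y$ and its derivatives. Freedman then yields a subgaussian tail at scale $\sqrt{M_0 \cdot \max_j|D_j|}$; plugging in the uniform bound on $|D_j|$ produces deviations of order $\lambda^s \sqrt{M_0 M_1}$, matching the target. The polynomial prefactor $|V(F)|^{s-1}$ arises from the nested union bounds across the $s$ levels of the recursion.

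The main obstacle, and the technical heart of the Kim--Vu argument, is orchestrating the recursion so that partial derivatives of every order $1,\dots,s$ concentrate simultaneously on a single good event without the constants $a(s)$ and $b(s)$ blowing up. I would handle this by strengthening the inductive hypothesis to assert joint concentration of $Y$ together with all of its partial derivatives at the scales $\sqrt{M_j M_{j+1}}$, and by carefully tracking how the multiplicative constants inflate at each level so that the end constants $a(s), b(s)$ remain finite for fixed $s$. A second subtlety is that the naive martingale-difference bound $|D_j| \leq \|\partial_j Y\|_\infty$ can be wasteful; I would refine it by replacing $\|\partial_j Y\|_\infty$ with its value at a convex combination of $z$-settings, which is still controlled by the inductive concentration of $\partial_j Y$.
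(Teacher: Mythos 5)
This theorem is not proved in the paper at all: it is quoted as a black box, attributed to Kim and Vu \cite{kimvu}, and used directly in the proof of Lemma~\ref{epslemma}. There is therefore no ``paper proof'' to compare your sketch against, and no proof was expected of you here.

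That said, a few remarks on the sketch itself, since it does trace the genuine Kim--Vu route (induction on the degree $s$, Doob martingale with increments governed by first-order partial derivatives, union bound giving the $|V(F)|^{s-1}$ prefactor). The one step that does not hold as written is the quadratic-variation bound
$\sum_j \Ex\bigl[D_j^2 \mid \mathcal{F}_{j-1}\bigr] \le \bigl(\max_j |D_j|\bigr)\sum_j \Ex|D_j| \le \bigl(\max_j |D_j|\bigr)M_0$.
Writing $D_j = (z_j - \Ex z_j)\,W_j$ with $W_j = \Ex[\partial_j Y \mid \mathcal{F}_{j-1}]\ge 0$, the quantity $\sum_j \Ex[|D_j|\mid\mathcal{F}_{j-1}]$ is of order $\sum_j W_j = \sum_j \Ex[\partial_j Y \mid \mathcal{F}_{j-1}]$, whose \emph{unconditional} expectation is $\sum_j M_{\{j\}}$. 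This can exceed $M_0$ by a factor as large as $n = |V(F)|$ (e.g.\ for $s$-uniform $F$ with all $z_i$ Bernoulli$(p)$ it equals $sM_0/p$), so nonnegativity does not give $\sum_j \Ex|D_j| \le M_0$. The estimate that actually closes the argument uses the extra factor of $\Var(z_j)\approx\Ex[z_j]$ in $\Ex[D_j^2\mid\mathcal{F}_{j-1}]=\Var(z_j)W_j^2$: one has $\sum_j \Ex[z_j]W_j^2 \le (\max_j W_j)\sum_j \Ex[z_j]W_j$, and $\sum_j \Ex[z_j]W_j$ is a martingale-type average whose typical size is $sM_0$ by Euler's identity $\sum_j z_j\,\partial_j Y = sY$. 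But this quantity is random, not an almost-sure bound, so one cannot feed it directly into Freedman; Kim and Vu handle it with a stopping-time/truncation device (stop the martingale when either some $W_j$ or the accumulated conditional variance first grows too large, and bound the probability of stopping by the inductive hypothesis). Your sketch elides exactly this layer, which is the real technical core of their proof. The rest of the outline --- identifying $Z_A$ with $\partial_A Y$, strengthening the induction to joint concentration of all lower-order derivatives, and tracking the $a(s),b(s)$ blow-up --- is consonant with the published argument.
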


\section{Deviation inequality}\label{devinequality}
\noindent
To motivate our next lemma, which is the main novel ingredient in this work,
we outline the proof of Lemma \ref{partitionlemma}, which appears in Section \ref{secpartitionlemma}.
We are given a hypergraph $G$ and a fixed hypergraph $H$, and we know that
each $u \in V(G)$ is in very few copies of $H$.
Our goal is to produce a coloring of $G$ such that the subhypergraph induced by each
color is $H$-free.
We randomly color the vertices of $G$, hoping to remove all
copies of $H$ in the induced subgraph of each color.
Consider some $u \in V(G)$. Since $u$ is in few copies of $H$, 
the expected number of these which remain in $u$'s color is much less than $1$.
We would like to conclude that with very low probability,
$u$ is in more than $c$ copies of $H$ in the subgraph
induced by $u$'s color, for some constant $c$. We could then use the Local Lemma to
find a coloring such that $u$ is in at most $c$ copies of $H$ in the subgraph
induced by $u$'s color, which would bring us close to our goal of removing all copies of $H$ from the subgraph. 
To draw this conclusion, we could try to let $F_u$ be the set of
copies of $H$ which contain $u$, and let $F_u'$ be the set of copies of $H$
which contain $u$ and whose vertices all receive $u$'s color. Then we could apply
Theorem \ref{kimvu} to bound $F_u'$ and make the above conclusion. However, in our case, 
``very low probability'' requires us to set $\lambda = \Omega(\log|V(F_u)|)$.
Since $M_0$ and $M_1$ are always at least $1$ (the empty product is taken to be $1$),
this only allows us to conclude that $u$ is in $\Omega(\log|V(F_u)|)$ copies
of $H$. Since $|V(F_u)|$ will be tending to $\infty$, this is not small enough.

Frieze and the second author~\cite{fmcoloringk} overcame this by using a two-step random process.
However, their proof used the linearity of $G$ and the assumption that $H$ is a triangle,
and we were not able to easily duplicate their method.
Our approach, which can be viewed as a generalization of the method in \cite{aloncoloringsparse}, 
is to instead bound the \emph{transversal number} of the hypergraph $F_u'$.
A \emph{transversal} of $F_u$ is a subset $K \subset V(F_u)$
such that $A \cap K \neq \emptyset$ for all $A \in F_u$. In other words, the 
subhypergraph of $F_u$ induced by $V(F_u)-K$ has no edges. The \emph{transversal number}
of $F_u$, denoted $\tau(F_u)$, is the minimum size of a transversal of $F_u$.

Returning to our outline, we will use Lemma \ref{hittinglemma} below to show that 
the probability that $\tau(F_u')$ is large is very low.

\begin{lemma}\label{hittinglemma}
Suppose $F$ is an $s$-uniform hypergraph,
and $z_i$, $i \in V(F)$ are independent random indicator variables with $\Pr[z_i = 1] = p$, 
for all $i \in V(F)$.
Let 
\[
F' = \{A \in F: \forall i \in A, z_i=1\}.
\]
Suppose there exists $\alpha > 0$ such that $|F|p^{(1-\alpha)s} < 1$.
Then for any $c \geq e2^ss\alpha$,
\[
\Pr[\tau(F') > s^2(c/\alpha)^{s+1}] \leq s^2 |V(F)|^{s-1}p^c.
\]
\end{lemma}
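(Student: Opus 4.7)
My plan is to control the transversal number indirectly, through the matching number $\nu(F')$, via a first-moment bound over matchings. The key structural fact is that for any $s$-uniform hypergraph $H$ one has $\tau(H) \leq s\,\nu(H)$: taking a maximum matching $M$, its vertex set $V(M)$ has size $s|M| = s\nu(H)$, and by maximality every edge of $H$ meets $V(M)$, so $V(M)$ is a transversal. Applied to $F'$, this shows that the event $\tau(F') > s^2(c/\alpha)^{s+1}$ implies $\nu(F') > s(c/\alpha)^{s+1}$, so it suffices to bound the probability that $F'$ contains a matching of size $m := \lceil s(c/\alpha)^{s+1} \rceil$.

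A first-moment argument handles this. The number of matchings of size $m$ in $F$ is at most $\binom{|F|}{m} \leq |F|^m/m!$, and any such matching uses $sm$ distinct vertices, so it survives in $F'$ with probability exactly $p^{sm}$. Therefore
\[
\Pr[\nu(F') \geq m] \;\leq\; \frac{|F|^m p^{sm}}{m!} \;\leq\; \frac{p^{\alpha s m}}{m!},
\]
where the second inequality uses the hypothesis $|F|p^{(1-\alpha)s} < 1$. From $m \geq s(c/\alpha)^{s+1}$ we get $\alpha s m \geq s^2 c\,(c/\alpha)^s$, and the hypothesis $c \geq e\,2^s s\alpha$ gives $c/\alpha \geq e\,2^s s \geq 1$, hence $s^2(c/\alpha)^s \geq 1$ and $\alpha s m \geq c$. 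Since $0 < p \leq 1$, this yields $p^{\alpha s m} \leq p^c$, and thus
\[
\Pr\!\left[\tau(F') > s^2(c/\alpha)^{s+1}\right] \;\leq\; \frac{p^c}{m!} \;\leq\; s^2|V(F)|^{s-1}\,p^c,
\]
as required.

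There is essentially no hard step here: the matching-to-transversal inequality and the first-moment computation are both standard, and the numerical check at the end uses very little of the quantitative strength of the hypothesis. In fact, the route above is substantially stronger than what is stated---the factor $1/m!$ and the enormous slack between $\alpha s m$ and $c$ (a factor of $s^2(c/\alpha)^s$) together show that a much smaller threshold (of order $c/\alpha$, with no $|V(F)|^{s-1}$ factor) would suffice. This gap suggests the authors may favor a different proof, perhaps an iterative structural decomposition in the spirit of \cite{aloncoloringsparse}, or a Kim--Vu style polynomial concentration argument that naturally produces the factor $|V(F)|^{s-1}$; however, the matching approach outlined above comfortably establishes the claim as stated.
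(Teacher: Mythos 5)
Your proof is correct, and it takes a genuinely different and substantially simpler route than the paper's. The paper's own argument is a hierarchical one: it introduces thresholds $\tau_k = |F|p^{(1-\alpha)k}$, defines for each $k$ the family $H_k$ of ``minimally heavy'' $k$-sets (those with $d(A) > \tau_k$ but all proper subsets light), shows that the union of the surviving heavy sets $\bigcup_k V(Z_k)$ is a transversal of $F'$, and bounds each $|Z_k|$ via Proposition~\ref{matchingclaim} (a Ramsey-type ``large matching in some link'' statement) together with a union bound over the base set $B$ and over matchings in $L_{Z_k}(B)$ --- this union over $B$ is exactly where the $|V(F)|^{s-1}$ factor enters. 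You bypass the entire decomposition by using the elementary bound $\tau(F')\le s\,\nu(F')$ and a single first-moment estimate $\Pr[\nu(F')\ge m]\le \binom{|F|}{m}p^{sm}\le (|F|p^s)^m$, with $|F|p^{(1-\alpha)s}<1$ giving $|F|p^s < p^{\alpha s}$; the arithmetic check $\alpha s m\ge c$ is right, and you are also right that the slack is huge --- your route in fact gives $\Pr[\tau(F')>c/\alpha+s]\le p^c$ with no $|V(F)|^{s-1}$ factor (the paper's proof likewise overshoots its own statement, delivering $s^2(c/\alpha)^s$ rather than $s^2(c/\alpha)^{s+1}$). The only thing the paper's method buys that yours does not is an \emph{explicit} transversal read off from the heavy sets, but that is not used; for the application in Lemma~\ref{epslemma}, where one only needs $\tau(T'_H(u))$ bounded by a constant with failure probability $\Delta^{-\Omega(1)}$, either proof suffices.
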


\noindent
We then repeat this for all $v \in V(G)$ in the same color class as $u$.
Let $K_v$ be the set of vertices in a transversal of size at most $\tau(F_v')$.
Following \cite{aloncoloringsparse}, we create a $2$-graph on these vertices,
with an edge from $v$ to each vertex in $K_v$. Since $\tau(F_v')$ is bounded by
a constant, this graph has constant out-degree and can be properly colored
with a constant number of colors. Since the neighborhood of $v$ in this graph
is a transversal and all of the vertices in the neighborhood of $v$ received a different
color than $v$, none of the edges of $F_v$ (which correspond to copies of $H$ containing $v$) 
could survive in $v$'s new color class.
Thus the subgraph induced by each new color contains no copies of $H$.

\subsection{Proof of Lemma \ref{hittinglemma}}
We will need the following simple proposition to prove Lemma \ref{hittinglemma}.
It is a straightforward generalization of the well-known fact that a graph with
many edges has either a large matching or a large star.
Let $F$ be a $k$-uniform hypergraph.
Recall that $M \subset F$ is a \emph{matching} 
if $A \cap B = \emptyset$ for any $A,B \in M$ with $A \neq B$.
When $F$ is $k$-uniform, recall that for all $A \subset V(F)$,
\[
L_F(A) = \{B-A: B \in F \text{ with } A \subset B\}.
\]

\begin{prop}\label{matchingclaim}
If $F$ is a $k$-uniform hypergraph, then
there exists $A \subset V(F)$ such that $L_F(A)$ contains a
matching of size at least $\frac{|F|^{1/k}}{k-|A|}$.
\end{prop}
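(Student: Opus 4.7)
The plan is to prove Proposition \ref{matchingclaim} by induction on $k$, using a standard matching-vs-star dichotomy at each step.

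The base case $k=1$ is essentially trivial: a $1$-uniform hypergraph is just a set of singletons, so taking $A = \emptyset$ we have that $L_F(\emptyset) = F$ is itself a matching of size $|F| = |F|^{1/1}/(1-0)$.

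For the inductive step, suppose the result holds for $(k-1)$-uniform hypergraphs and let $F$ be $k$-uniform. First I would take a maximum matching $M$ in $F$. If $|M| \ge |F|^{1/k}/k$, we are done by taking $A = \emptyset$. Otherwise, every edge of $F$ must meet $V(M)$ (by maximality), and $|V(M)| = k|M| < |F|^{1/k}$, so by averaging some vertex $v \in V(M)$ lies in at least $|F|/|V(M)| > |F|^{(k-1)/k}$ edges. Now apply the inductive hypothesis to the $(k-1)$-uniform hypergraph $L_F(\{v\})$, which has more than $|F|^{(k-1)/k}$ edges, to obtain a set $A' \subset V(L_F(\{v\}))$ such that $L_{L_F(\{v\})}(A')$ contains a matching of size at least
\[
\frac{|L_F(\{v\})|^{1/(k-1)}}{(k-1)-|A'|} \ge \frac{|F|^{1/k}}{(k-1)-|A'|}.
\]
Setting $A = \{v\} \cup A'$, we have $L_F(A) = L_{L_F(\{v\})}(A')$ and $k - |A| = (k-1) - |A'|$, which gives the desired bound.

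There is no real obstacle; the only thing to be a bit careful about is ensuring the link identity $L_F(\{v\} \cup A') = L_{L_F(\{v\})}(A')$ is applied correctly and that the pigeonhole is set up to leave exactly the right power of $|F|$ for the induction to carry through.
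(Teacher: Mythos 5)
Your proof is correct and follows essentially the same approach as the paper: establish a matching-vs-star dichotomy, and if no large matching exists, find a high-degree vertex and recurse on its link via the identity $L_F(\{v\} \cup A') = L_{L_F(\{v\})}(A')$. The only cosmetic difference is that you obtain the dichotomy by taking a maximum matching $M$ and averaging over $V(M)$, whereas the paper deduces it from a greedy edge-coloring bound; both yield the same ``large matching or high-degree vertex'' alternative.
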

\begin{proof}
We induct on $k$. If $k = 1$, the claim holds with $A=\emptyset$. Assume the result for $k$,
and let $F$ be a $k+1$-uniform hypergraph with maximum degree $\Delta$.
By the greedy coloring algorithm, $F$ can be partitioned into $k\Delta$ matchings,
so $F$ contains a matching with $\frac{|F|}{k\Delta}$ edges. 
Thus $\Delta > |F|^{k/(k+1)}$ or 
$F$ contains a matching with at least $\frac{|F|}{k |F|^{k/(k+1)}} = \frac{|F|^{1/(k+1)}}{k}$ edges.
In the second case, we are done (with $A=\emptyset$), so 
assume there exists $u \in V(F)$ with $d(u) > |F|^{k/(k+1)}$. 
Set $A = \{u\}$ and consider the $k$-uniform hypergraph $L_F(A)$.
By induction, there exists $B \subset V(L_F(A))$ such that $L_{L(A)}(B)$
contains a matching of size at least 
\[
\frac{|L_F(A)|^{1/k}}{(k-|B|)} 
= \frac{|L_F(A)|^{1/k}}{k+1-|A\cup B|}
> \frac{(|F|^{k/(k+1)})^{1/k}}{k+1-|A\cup B|}
= \frac{|F|^{1/(k+1)}}{k+1-|A\cup B|}.
\]
Since $B \subset V(L_F(A)) \subset V(F)$ and $L_{L(A)}(B) = L_F(A \cup B)$, 
this completes the proof.
\end{proof}

\noindent
\emph{Proof of Lemma \ref{hittinglemma}}.
For $k=0,1,\dots,s$, set $\tau_k = |F|p^{(1-\alpha)k}$. 
For $k=1,\dots,s$, let
\[
H_k = \{ A \subset V(F): |A| =k, d(A) > \tau_k, \text{ and } \forall B \subsetneq A, d(B) \leq \tau_{|B|} \}.
\]

\noindent
Fix $k$, where $1 \leq k \leq s$. Let $B \in \binom{V(F)}{b}$, where $b < k$.
Suppose there exists $A \in H_k$ with $B \subsetneq A$.
By definition of $H_k$, $d(B) \leq \tau_{b}$.
Since each such $A$ corresponds to at least $\tau_k$ sets in $F$, 
and each of these $\tau_k$ sets is counted at most $\binom{s}{k}$ times
under this correspondence, this implies
\begin{equation}\label{heavybound}
\tau_k |\{A \in H_k: B \in \binom{A}{b}\}| \leq \binom{s}{k}d(B) < 2^s \tau_{b}.
\end{equation}
Consider the $k$-uniform hypergraph $Z_k = \{A \in H_k: z_i = 1 \text{ }\forall i \in A\}$.
Suppose $|Z_k| \geq (c/\alpha)^{k}$. 
Then by Proposition \ref{matchingclaim}, there exists $B \in \binom{V(Z_k)}{b}$,
where $b < k$, such that the $(k-b)$-uniform hypergraph $L_{Z_k}(B)$ contains a matching of size 
at least $x_{b} := \frac{c/\alpha}{(k-b)}$.
Each edge in this matching corresponds to an edge $A \in H_k$ with $B \subset A$.
By \eqref{heavybound}, there are at most $2^s \tau_{b}/\tau_k$ edges.
Thus the probability that such a $B$ exists is at most
\begin{align*}
\sum_{b=1}^{k-1}\sum_{B \in \binom{V(H_k)}{b}}\binom{2^s\tau_{b}/\tau_k}{x_{b}}p^{(k-b)x_{b}}
&\leq \sum_{b=1}^{k-1}\sum_{B \in \binom{V(H_k)}{b}}(\frac{e2^s \tau_{b}}{\tau_k x_{b}})^{x_{b}}p^{(k-b)x_{b}} \\
&< \sum_{b=1}^{k-1}\sum_{B \in \binom{V(H_k)}{b}}(\frac{\tau_{b} p^{k-b}}{\tau_k })^{x_{b}} \\
&= \sum_{b=1}^{k-1}\sum_{B \in \binom{V(H_k)}{b}}p^{((1-\alpha)b + k - b - (1-\alpha)k)x_{b}} \\
&= \sum_{b=1}^{k-1}\sum_{B \in \binom{V(H_k)}{b}}p^{\alpha (c/\alpha)} \\
&< k|V(F)|^{k-1} p^{c}. \\
\end{align*}
Therefore, for each $k=1,\dots,s$,
\[
\Pr[|Z_k| > (c/\alpha)^{k}] < k|V(F)|^{k-1}p^c.
\]
Hence
\begin{align*}
\Pr[|\bigcup_{k=1}^s Z_k| \geq s (c/\alpha)^{s}]] 
&< \Pr[|\bigcup_{k=1}^s Z_k| \geq \sum_{k=1}^s (c/\alpha)^{k}] \\
&< \sum_{k=1}^s k |V(F)|^{k-1}p^c \\
&< s^2 |V(F)|^{s-1}p^c.
\end{align*}
Since each edge in $\bigcup_{k=1}^s Z_k$ contains at most $s$ vertices,
this implies
\[
\Pr[|\bigcup_{k=1}^s V(Z_k)| \geq s^2 (c/\alpha)^{s}] < s^2|V(F)|^{s-1}p^c.
\]
We now claim that $\bigcup_{k=1}^s V(Z_k)$ is a transversal of $F'$.
Suppose $A \in F$ and $z_i = 1$ for all $i \in A$.
Since $d(A) = 1 > |F|p^{(1-\alpha)s} = \tau_s$, 
$A \in H_s$ or $d(B) > \tau_{b}$ for some $B \in \binom{A}{b}$.
If $A \in H_s$, then $A \in Z_s$ and $A \subset V(Z_s)$, so assume the second case.
Choose a minimal set $B \in \binom{A}{b}$ with $d(B) > \tau_{b}$. 
Then $B \in H_{b}$, so $B \in Z_{b}$ and hence
$A \cap V(Z_{b}) \neq \emptyset$. \qed

\section{Proof of Lemma \ref{partitionlemma}}\label{secpartitionlemma}
We break the proof of Lemma \ref{partitionlemma} into two steps.
In step 1, we prove Lemma \ref{epslemma}, which is a slight variant of Lemma \ref{partitionlemma}
when $f \geq \Delta^{\epsilon}$.
In Section \ref{secfsmall}, we will use the same argument as in \cite{aloncoloringsparse} to show 
that the proof of Lemma \ref{partitionlemma} can be reduced to Lemma \ref{epslemma}.
\subsection{Step 1: $f \geq \Delta^{\epsilon}$}
\begin{lemma}\label{epslemma}
Fix $k \geq 2$ and $\epsilon \in (0, \frac{1}{k-1})$.
Let $G$ be a rank $k$ hypergraph, and 
let $\mathcal{H}$ be a finite family of fixed hypergraphs.
Suppose that
\begin{itemize}
\item $G$ is $(\Delta, \omega_2, \dots, \omega_k)$-sparse, where 
  $\omega_j = \omega_j(\Delta) = \Delta^{o(1)}$
for all $2 \leq j \leq k$.
\item $\Delta_H(G) \leq \Delta^{\frac{v(H)-1}{k-1}-v(H)\epsilon}$ for all $H \in \mathcal{H}$.
\end{itemize}
Then $V(G)$ can be partitioned into $O( \Delta^{\frac{1}{k-1}-\epsilon})$ parts
such that the hypergraph induced by each part is $\mathcal{H}$-free
and has maximum $j$-degree at most $2\Delta^{(j-1)\epsilon}\omega_j$,
for each $j \in [k]$.

\end{lemma}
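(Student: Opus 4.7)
The plan is to follow the outline preceding the statement: randomly color $V(G)$ with $r = C\Delta^{1/(k-1)-\epsilon}$ colors (set $p = 1/r$), use concentration to control degrees within each color class, and use Lemma \ref{hittinglemma} to control transversals of surviving copies of each $H\in\mathcal{H}$. After the random coloring I introduce two families of bad events. For each $u\in V(G)$ and $j\in\{2,\dots,k\}$, the event $A_{u,j}$ says that $u$ lies in more than $2\Delta^{(j-1)\epsilon}\omega_j$ edges of size $j$ whose remaining vertices all share $u$'s color. For each $u$ and each $H\in\mathcal{H}$, pick $v_H\in V(H)$ with $\Delta_H(G)=\Delta_{H,v_H}(G)$, let $F_{u,H}$ be the $(v(H)-1)$-uniform hypergraph whose edges are $T-\{u\}$ over copies $T$ of $H$ in $G$ placing $u$ at $v_H$, and let $F'_{u,H}$ be its restriction to vertices receiving $u$'s color; the event $B_{u,H}$ says that $\tau(F'_{u,H})$ exceeds a large constant $C_H$.

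The expected $j$-degree in $u$'s color class is at most $\Delta^{(j-1)/(k-1)}\omega_j \cdot p^{j-1} = \Delta^{(j-1)\epsilon}\omega_j/C^{j-1}$. Using the sparsity hypothesis to bound the moments $M_r$ appearing in Theorem \ref{kimvu}, I obtain $\Pr[A_{u,j}]\leq \exp(-\Delta^{\Omega(1)})$ once $C$ is large. For $B_{u,H}$ I apply Lemma \ref{hittinglemma} with $s=v(H)-1$. A direct computation shows that the hypothesis $\Delta_H(G)\leq\Delta^{(v(H)-1)/(k-1)-v(H)\epsilon}$ gives $|F_{u,H}|p^{(1-\alpha)s}\leq\Delta^{-\epsilon+\alpha s(1/(k-1)-\epsilon)}$, so choosing $\alpha<\epsilon/\bigl(s(1/(k-1)-\epsilon)\bigr)$ verifies the hypothesis of the lemma; its conclusion then gives $\Pr[B_{u,H}]\leq\Delta^{-c'}$ for any prescribed constant $c'$ by taking $C_H$ large enough.

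Each bad event depends only on the colors of polynomially many vertices, so the Asymmetric Local Lemma produces a coloring avoiding all of them. This coloring already satisfies the claimed $j$-degree bound. To eliminate the remaining copies of each $H$, I mimic the Alon--Krivelevich--Sudakov refinement: for each color class $S$, each $u\in S$, and each $H\in\mathcal{H}$, fix a transversal $K_{u,H}\subset V(F'_{u,H})$ of size at most $C_H$ (granted by $\overline{B_{u,H}}$) and form an auxiliary graph $J$ on $S$ with an edge from $u$ to every vertex in $\bigcup_{H\in\mathcal{H}} K_{u,H}$. Because $J$ has maximum degree at most the constant $\sum_{H\in\mathcal{H}} C_H$, it admits a proper coloring with boundedly many colors; refining $S$ using this coloring yields parts in which every copy of $H$ through any vertex loses a transversal vertex, hence no copy of $H$ remains. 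Subdivision only decreases $j$-degrees, and the total number of final parts is $O(r) = O(\Delta^{1/(k-1)-\epsilon})$.

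The hard part will be calibrating the constants simultaneously: $\alpha$ must be small enough for $|F_{u,H}|p^{(1-\alpha)s}<1$ yet large enough that the requirement $c\geq e 2^s s\alpha$ still leaves $c$ big enough to make $\Pr[B_{u,H}]$ as small as the Local Lemma needs, while $C$ must be large enough for Theorem \ref{kimvu} to give $\exp(-\Delta^{\Omega(1)})$-type concentration for every $j$, including the borderline case $j=2$ whose expected degree is only $\Delta^{\Omega(\epsilon)}$. Controlling the Kim--Vu moments $M_r$ via the $(\Delta,\omega_2,\dots,\omega_k)$-sparsity hypothesis for $r\geq 1$ is the main delicate computation, but all the constraints are mutually compatible and the Local Lemma then closes the argument.
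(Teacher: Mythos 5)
Your proposal is correct and follows essentially the same route as the paper: random coloring with $\Theta(\Delta^{1/(k-1)-\epsilon})$ colors, Kim--Vu to control within-class $j$-degrees, Lemma~\ref{hittinglemma} (with $\alpha$ just below $\epsilon/\bigl((v(H)-1)(\tfrac{1}{k-1}-\epsilon)\bigr)$) to control transversals of surviving copies of each $H$, the Local Lemma to avoid all bad events, and an auxiliary-graph refinement of each color class to destroy the remaining copies. One small correction: the auxiliary graph $J$ does not have bounded \emph{maximum degree} --- a vertex $w$ may lie in the transversal $K_{v,H}$ of many other vertices $v$ --- but since each vertex contributes at most $\sum_{H\in\mathcal{H}} C_H$ edges, every subgraph of $J$ has average degree at most $2\sum_H C_H$, so $J$ is $O(1)$-degenerate and the $O(1)$-coloring you want still exists; this is the degeneracy argument the paper uses.
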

\begin{proof}
Let $N = \max_{H \in \mathcal{H}}v(H)$.
Color the vertices of $G$ uniformly at random with $r = \Delta^{\frac{1}{k-1} - \epsilon}$ colors.
Fix $u$, and for each $v \in V(G)$, let $z_{v}$ be a random
indicator variable which is $1$ if $v$ receives the same color as $u$
and $0$ otherwise. Note that $\Pr[z_v = 1] = 1/r$.
For each $H \in \mathcal{H}$, choose $v_H \in V(H)$ such that $\Delta_{H,v_H}(G) = \Delta_H(G)$.
Define a $(v(H)-1)$-uniform hypergraph
\[
T_{H}(u) = \{V(A)-u: A \subset G \text{ with } A \cong_\phi H \text{ and } \phi(u) = v_H \}.
\]
Since to each $A \subset G$ with $A \cong_{\phi} H$ and $\phi(u) = v_H$ we
may associate the set $V(A)-u \in T_H(u)$,
\[
v(T_H(u)) \leq (N-1)|T_H(u)|(N-1) \leq \Delta_H(G).
\]
Also, let
\[
T'_H(u) = \{A \in T_H(u): \forall v \in A, z_v=1\}.
\]
We define $k + |\mathcal{H}|$ bad events for each $u$.
\begin{itemize}
\item
$A_{u,j}$: For $1 \leq j \leq k$, $A_{u,j}$ denotes the event 
\[
Z_{u,j} := \sum_{A \in L_{H, j}(u)} \prod_{i \in A}z_i \geq 2\Delta^{{(j-1)}\epsilon}\omega_j.
\]
\item
$B_{u, H}$: For each $H \in \mathcal{H}$, $B_{u,H}$ denotes the event 
  $\tau(T'_H(u)) > (v(H)-1)^2 (c/\alpha_H)^{v(H)}$, where 
\[
\alpha_H 
= 1-\frac{v(H)\epsilon - \frac{v(H)-1}{k-1}}{(\epsilon-\frac{1}{k-1})(v(H)-1)}
= \frac{\epsilon}{(\frac{1}{k-1} - \epsilon)(v(H)-1)} 
> 0
\]
and 
\[
c = \frac{(N^2 + 3N)2^N}{\frac{1}{k-1}-\epsilon} > 3 N 2^N.
\]

\end{itemize}
To bound the probability of $A_{u,j}$,
we apply Theorem \ref{kimvu} to the $(j-1)$-uniform hypergraph $L_{H,j}(u)$.
Let $A \subset V(L_{H,j}(u))$ with $|A| \leq j-1$. 
Then
\begin{align*}
M_{A} 
= d_j(A\cup \{u\})/r^{j-|A|-1} 
&\leq \Delta_{j,|A|+1}(G)/r^{j-|A|-1} \\
&\leq \Delta^{^\frac{j-|A|-1}{k-1}} \omega_j/r^{j-|A|-1} \\
&= \Delta^{(j-|A|-1)\epsilon} \omega_j.
\end{align*}
Therefore $\Ex[Z_u] = M_0 \leq \Delta^{(j-1)\epsilon}\omega_j$
and $M_1 \leq \Delta^{(j-2)\epsilon}\omega_j$.
Setting $\lambda = (k+3N+2)\log\Delta$, we obtain constants $a$ and $b$ such that
\begin{align*}
\Pr[A_{u,j}] 
= \Pr[Z_{u,j} \geq 2\Delta^{(j-1)\epsilon}\omega_j]
&< \Pr[Z_{u,j} \geq \Delta^{(j-1)\epsilon}\omega_j + a\Delta^{\epsilon(j-1)-\epsilon/2}\lambda^{j-1}] \\
&\leq b(j\Delta)^{j-2}\Delta^{-(j+3N+2)} \\
&< \Delta^{-3N}.
\end{align*}

To bound $B_{u,H}$, we apply Lemma \ref{hittinglemma} with $F = T_H(u)$, $F' = T'_H(u)$, and $p=1/r$.
Since
\[
|T_H(u)|p^{(1-\alpha_H)(v(H)-1)} 
\leq \Delta^{\frac{v(H)-1}{k-1} - v(H)\epsilon}p^{(1-\alpha_H)(v(H)-1)}
= 1,
\]
$\alpha_H \in (0,1)$, and $c > 3N2^N > e 2^N N \alpha_H > e 2^{v(H)-1}(v(H)-1)\alpha_H$,
Lemma \ref{hittinglemma} implies
\begin{align*}
\Pr[B_{u,H}] 
&\leq (v(H)-1)^2 v(T_H(u))^{v(H)-2} p^c \\
&= (v(H)-1)^2 v(T_H(u))^{v(H)-2} \Delta^{(-N^2 - 3N)2^{N}} \\
&< (v(H)-1)^2 v(T_H(u))^{v(H)-2} \Delta^{-N^2 - 3N} \\
&< N^2 (N\Delta^{\frac{N-1}{k-1}})^{N-1} \Delta^{-N^2 - 3N} \\
&< \Delta^{-3N}.
\end{align*}
Each event $A_{u,j}$ is determined by the colors assigned to the vertices in $N_j(u) \cup \{u\}$.
Each event $B_{u,H}$ is determined by the colors assigned to the vertices in $T_H(u) \cup \{u\}$.
Thus $A_{u,j}$ depends on:
\begin{itemize}
\item Events of the form $A_{v,i}$ where $N_j(u) \cap N_i(v) \neq \emptyset$.
    There are at most 
    \[
    (j\Delta^{\frac{j-1}{k-1}}\omega_j)(\sum_{l=1}^k l \Delta^{\frac{l-1}{k-1}}\omega_l) 
     < (j\Delta)(k^2\Delta) < \Delta^{N}
    \]
    such events.
\item Events of the form $B_{v,H}$, where $N_j(u) \cap V(T_H(v)) \neq \emptyset$.
  Since $|N_j(u)| \leq j\Delta$ and $|V(T(v,H))| \leq N\Delta^{\frac{N-1}{k-1}}$,
  there are at most $(j \Delta)|\mathcal{H}|\Delta^{\frac{N-1}{k-1}} \leq \Delta^{N}$ such events.
\end{itemize}
Also, $B_{u,H}$ depends on: 
\begin{itemize}
\item Events of the form $A_{v,j}$ where $V(T_H(u)) \cap N_j(v) \neq \emptyset$.
  There at most $(k^2 \Delta)N\Delta^{\frac{N-1}{k-1}} \leq \Delta^{N}$ such events.
\item Events of the form $B_{v,H'}$, where $V(T_H(u)) \cap V(T_{H'}(v)) \neq \emptyset$.
  There are at most $|\mathcal{H}|N^2\Delta^{2\frac{N-1}{k-1}} \leq \Delta^{2N}$ such events.
\end{itemize}
Since the probability of each event is at most $\Delta^{-3N}$, 
the Local Lemma implies that there exists
a coloring of $V(G)$ with $r$ colors so that none of the events
$A_{u,j}$ or $B_{u,H}$ occur. 

Fix a color, and consider the subhypergraph $G'$ induced by the vertices which received
that color.
For each $u \in V(G')$ and each $H \in \mathcal{H}$, 
let $K(u, H)$ be a minimum sized transversal of $T'(u, H)$.
Create a simple graph $W$ on $V(G')$ with edge set 
$\{\{u,v\}: u \in V(G'), v \in K(u, H) \text{ for some } H \in \mathcal{H}\}$.
Consider any subgraph of $W$ on $n'$ vertices.
Since no event $B_{u,H}$ occurs, the number of edges in this subgraph
is at most $n'|\mathcal{H}|(N-1)^2(c/\alpha)^{N}$; it therefore contains a vertex
with degree at most $2|\mathcal{H}|(N-1)^2(c/\alpha)^{N}$.
$W$ is therefore $2|\mathcal{H}|(N-1)^2(c/\alpha)^{N}$-degenerate and
can thus be properly colored with $2|\mathcal{H}|(N-1)^2(c/\alpha)^{N}+1$ new colors. 
Since each of the $K(u, H)$ is a transversal,
the subhypergraph induced by each of these new colors is $\mathcal{H}$-free.
Repeating this for each of the original $r$ colors results in a partition
of $F$ into at most
\[
r(2|\mathcal{H}|(N-1)^2(c/\alpha)^N + 1) = O(\Delta^{\frac{1}{k-1} - \epsilon})
\]
parts, where each part is $\mathcal{H}$-free and 
has maximum $j$-degree at most $2\Delta^{(j-1)\epsilon}\omega_j$.
\end{proof}

\subsection{Step 2: $f < \Delta^\epsilon$}\label{secfsmall}
When $f < \Delta^{\epsilon}$, we use the same random halving argument as in \cite{aloncoloringsparse}.
We recursively divide the hypergraph into two parts
until we obtain a set of hypergraphs, each with maximum degree small enough to apply
Lemma \ref{epslemma}. 
The halving step is accomplished with Lemma \ref{halflemma}, while
Proposition \ref{seqclaim} is used to analyze the recursion.

\begin{prop}\label{ctlemma}
Let $G$ be a rank $k$, $(\Delta, 2\omega, \dots, 2\omega)$-sparse hypergraph.
Then for any $A \subset V(G)$ and $h \geq |A|$, $G$ contains at most 
$2^{(h+1) T} \omega^T \Delta^{\frac{h-|A|}{k-1}}$
connected hypergraphs on $h$ vertices which contain $A$,
where $T = 2^{h}$.
\end{prop}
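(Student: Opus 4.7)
The plan is to upper bound the number of connected subhypergraphs $H' \subseteq G$ on $h$ vertices containing $A$ by enumerating ordered ``growth sequences'' of their edges. Since any rank-$k$ hypergraph on $h$ vertices has at most $\sum_{j=2}^k\binom{h}{j} \leq 2^h = T$ edges, each $H'$ has at most $T$ edges.

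First, for a fixed such $H'$, I would order its edges as $e_1, \ldots, e_m$ (with $m \leq T$) so that each $e_i$ meets $V_{i-1} := A \cup e_1 \cup \cdots \cup e_{i-1}$. Such an ordering exists by the connectedness of $H'$ together with $A \neq \emptyset$: greedily pick any edge of $H'$ incident to the current vertex set, which is always possible until all edges are exhausted. Let $n_i := |V_i \setminus V_{i-1}| \in \{0,\ldots,k-1\}$, so $\sum_i n_i = h - |A|$.

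Next, I would bound the number of such ordered sequences by choosing, at step $i$: (i) the nonempty intersection $S_i := e_i \cap V_{i-1}$, a subset of $V_{i-1}$ (at most $2^h$ choices); (ii) the new-vertex count $n_i$ (at most $k$ choices); and (iii) the edge $e_i \supseteq S_i$ of size $|S_i|+n_i$. For (iii), the $(\Delta, 2\omega, \ldots, 2\omega)$-sparsity of $G$, applied via the $(|S_i|+n_i, |S_i|)$-degree bound, yields at most $2\omega \, \Delta^{n_i/(k-1)}$ edges. Multiplying over $i$ gives at most $(2^{h+1} k\omega)^m \Delta^{(h-|A|)/(k-1)}$---the $\Delta$ exponents telescope via $\sum_i n_i = h - |A|$---and summing over $m \leq T$ yields the bound $2^{(h+1)T}\omega^T \Delta^{(h-|A|)/(k-1)}$ after absorbing the polynomial-in-$(h,k,T)$ factors into the exponential constant. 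Since each $H'$ is counted at least once by some valid ordering, this is a valid upper bound.

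The key point, and the main place where care is needed, is keeping the $\Delta$ exponent correct. Invoking the full $(j,l)$-degree bound with $j=|S_i|+n_i$ and $l=|S_i|$ makes each edge contribute exactly $\Delta^{n_i/(k-1)}$, so the exponents telescope cleanly to $(h-|A|)/(k-1)$; a cruder bound based only on the $j$-degree, such as $2\omega \Delta^{(k-|S_i|)/(k-1)}$, would overshoot the target exponent. The remaining combinatorial overhead ($2^h$ for $S_i$, $k$ for $n_i$, the sum over $m \leq T$, and the overcounting of each $H'$ by orderings) contributes only polynomial-in-$T$ factors, easily absorbed into $2^{(h+1)T}$, which is doubly exponential in $h$.
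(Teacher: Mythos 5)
Your proof is correct and follows the same approach as the paper: greedily order the edges of $H'$ so each $e_i$ meets $V_{i-1} = A \cup e_1 \cup \dots \cup e_{i-1}$, bound the choices at step $i$ by $2^h$ (for $S_i \subseteq V_{i-1}$) times the $(|S_i|+n_i, |S_i|)$-degree bound $2\omega\Delta^{n_i/(k-1)}$, and telescope via $\sum_i n_i = h - |A|$. One small bookkeeping nit: the surplus from choosing each $n_i$ and summing over $m \leq T$ is about $(T+1)k^T$, which is exponential rather than polynomial in $T$, so the stated constant $2^{(h+1)T}$ is not literally recovered by your count --- but the paper's own writeup has the same informality (it never sums over the recipe $(j_i,l_i,a_i)$), and the exact form of the constant is immaterial in the one place the proposition is invoked, where it is simply absorbed into a $\Delta^{o(1)}$ factor.
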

\begin{proof}
Starting with any edge that intersects $A$, we try to
greedily grow a connected subgraph which contains $A$ and $h-|A|$
other vertices. At step $i$, we add an edge of size $j_i$
which contains $l_i \geq 1$ vertices already in the subgraph and
$a_i \geq 0$ vertices in $A$ which are not already in the subgraph.
There are at most $2^{h}\Delta_{j_i, l_i+a_i}$ choices for this edge.
Since one edge is added at every step, this process terminates after 
at most $T = 2^{h}$ steps, resulting in a total
of $2^{h T} \prod_{i=1}^{T}\Delta_{j_i, l_i+a_i}$ subgraphs
containing $A$. Since at each step we add $j_i - (l_i+a_i)$ vertices outside of $A$
to the subgraph, $\sum_{i=1}^T j_i-(l_i+a_i) = h-|A|$.
Thus
\[
2^{h T} \prod_{i=1}^{T}\Delta_{j_i, l_i+a_i}
\leq 2^{(h+1) T} \omega^T \prod_{i=1}^{T}\Delta^{\frac{j_i-(l_i+a_i)}{k-1}}
= 2^{(h+1) T} \omega^T \Delta^{\frac{h-|A|}{k-1}}.
\]

\end{proof}
\begin{lemma}\label{halflemma}
Let $G$ be a rank $k$ hypergraph, and 
let $\mathcal{H}$ be a finite family of fixed, connected hypergraphs.
Suppose that
$G$ is $(\Delta, 2\omega_2, \dots, 2\omega_k)$-sparse, where 
$\omega_j = \omega_j(\Delta) = \Delta^{o(1)}$
for all $2 \leq j \leq k$.
Then for $\Delta$ sufficiently large,
there exists a partition of $V(G)$ into two subhypergraphs $G_1$ and $G_2$ such that
\begin{itemize}
\item For each $i=1,2$ and $1 \leq l < j \leq k$, we have
  $\Delta_{j,l}(G_i) \leq \Delta_{j,l}(G)/2^{j-l} + \Delta^{\frac{j-l}{k-1}-\frac{1}{2k}}$ 
\item For each $i=1,2$ and $H \in \mathcal{H}$, we have
  $\Delta_H(G_i) \leq \Delta_H(G)/2^{v(H)-1} + \Delta^{\frac{v(H)-1}{k-1}-\frac{1}{2k}}$.
\end{itemize}

\end{lemma}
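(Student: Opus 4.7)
The plan is to produce $G_1, G_2$ by coloring each vertex of $G$ with $1$ or $2$ independently and uniformly, letting $G_i$ be the subhypergraph induced on $V_i$, and to apply the Asymmetric Local Lemma to show that with positive probability all required bounds hold simultaneously. For each $l$-set $A \subset V(G)$, each $j$ with $l < j \leq k$, and each $i \in \{1,2\}$, let $\mathcal{A}_{A,j,i}$ be the event that $A \subset V_i$ and $d_{G_i,j}(A) > \Delta_{j,l}(G)/2^{j-l} + \Delta^{\frac{j-l}{k-1}-\frac{1}{2k}}$. For each $H \in \mathcal{H}$ fix $w_H^* \in V(H)$ with $\Delta_{H,w_H^*}(G) = \Delta_H(G)$, and for each $u \in V(G)$ and each $i$ let $\mathcal{B}_{u,H,i}$ be the event that $u \in V_i$ and the number of $\phi$-copies of $H$ in $G_i$ with $\phi(u) = w_H^*$ exceeds $\Delta_H(G)/2^{v(H)-1} + \Delta^{\frac{v(H)-1}{k-1}-\frac{1}{2k}}$. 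If all of these events fail then the conclusion of the lemma follows, using $\Delta_H(G_i) \leq \Delta_{H,w_H^*}(G_i)$.

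Both families are controlled with Theorem \ref{kimvu}. For $\mathcal{A}_{A,j,i}$, apply Kim-Vu to the $(j-l)$-uniform hypergraph $F = \{B\setminus A : A \subset B \in G,\ |B|=j\}$ with $z_v = \mathbf{1}[v \in V_i]$ and $p = 1/2$, after conditioning on $A \subset V_i$. Sparsity of $G$ gives $M_{A'} \leq 2\omega_j\Delta^{(j-l-|A'|)/(k-1)}/2^{j-l-|A'|}$ for every $A' \subset V(F)$, so $\sqrt{M_0 M_1} = O(\omega_j)\,\Delta^{(2(j-l)-1)/(2(k-1))}$. Since $\tfrac{j-l}{k-1} - \tfrac{1}{2k} - \tfrac{2(j-l)-1}{2(k-1)} = \tfrac{1}{2k(k-1)} > 0$, the factor $\Delta^{1/(2k(k-1))}$ absorbs the $\omega_j^{O(1)}$ and poly-log factors for $\Delta$ large. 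Taking $\lambda = C\log\Delta$, the Kim-Vu deviation $a\lambda^s\sqrt{M_0 M_1}$ is at most $\Delta^{\frac{j-l}{k-1}-\frac{1}{2k}}$ and the failure probability is at most $b|V(F)|^{s-1}e^{-\lambda} \leq \Delta^{-M}$ for any desired $M$.

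For $\mathcal{B}_{u,H,i}$, apply Kim-Vu to the $(v(H)-1)$-uniform hypergraph $F$ whose edges are the sets $V(C)\setminus\{u\}$ as $C$ ranges over $\phi$-copies of $H$ in $G$ with $\phi(u) = w_H^*$. This is where connectedness of $H$ enters essentially: Proposition \ref{ctlemma} bounds the number of copies of $H$ containing a prescribed $(|A'|+1)$-set of vertices by $C_H\,\Delta^{(v(H)-1-|A'|)/(k-1)}$, hence $M_{A'} \leq C_H\,\Delta^{(v(H)-1-|A'|)/(k-1)}/2^{v(H)-1-|A'|}$. The arithmetic of the previous paragraph, with $j-l$ replaced by $v(H)-1$, then yields a Kim-Vu deviation of at most $\Delta^{\frac{v(H)-1}{k-1}-\frac{1}{2k}}$ with failure probability $\leq \Delta^{-M}$.

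Finally I would verify the Asymmetric Local Lemma. Each $\mathcal{A}_{A,j,i}$ is determined by the bits of $O(\omega_j\,\Delta^{(j-l)/(k-1)})$ vertices (those lying in a $j$-edge through $A$), and each $\mathcal{B}_{u,H,i}$ by the bits of at most $C_H\,\Delta^{(v(H)-1)/(k-1)}$ vertices (those in a copy of $H$ through $u$, again via Proposition \ref{ctlemma}); a dual counting argument shows that each vertex lies in only $\Delta^{O(1)}$ bad-event supports, so every bad event has at most $\Delta^{O(1)}$ neighbours in the dependency graph. With per-event probability $\Delta^{-M}$ for $M$ sufficiently large, the Asymmetric Local Lemma yields a partition avoiding every bad event. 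The main technical obstacle is the Kim-Vu estimate for $\mathcal{B}_{u,H,i}$, where $M_{A'}$ must be controlled over all $A' \subset V(F)$ of every size up to $v(H)-1$; this is precisely what Proposition \ref{ctlemma} together with connectedness of the members of $\mathcal{H}$ is designed to provide, and the exponent $\tfrac{1}{2k}$ arises as exactly the slack $\tfrac{1}{2(k-1)} - \tfrac{1}{2k}$ available in the Kim-Vu calculation.
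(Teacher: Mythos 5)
Your proposal is correct and follows the same high-level plan as the paper: a uniform random $2$-coloring, two families of bad events (one controlling the $(j,l)$-codegrees in each colour class, one controlling the number of surviving copies of each $H \in \mathcal{H}$ rooted at a vertex mapping to an optimal $w_H^*$), Proposition \ref{ctlemma} to bound the relevant $H$-copy counts, and the Asymmetric Local Lemma to finish. The one genuine divergence is the concentration tool. You run both families of events through the Kim--Vu inequality (Theorem \ref{kimvu}), which requires estimating $M_{A'}$ for all $A' \subset V(F)$; the paper instead uses McDiarmid's bounded-differences inequality (Theorem \ref{mcdiarmid}), bounding the Lipschitz constant of $d'_j(A)$ by $\Delta_{j,|A|+1}(G)$ and that of $|T'_H(u)|$ by $|T_H(u,v)| \leq \omega\Delta^{(v(H)-2)/(k-1)}$ (the latter again via Proposition \ref{ctlemma}, but only at a single parameter $|A|=2$ rather than for every $|A'|$). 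Because the allowed error term $\Delta^{\frac{j-l}{k-1}-\frac{1}{2k}}$ leaves a polynomial slack $\Delta^{1/(2k(k-1))}$ over the $\sqrt{\sum c_v^2}$ (resp.\ $\sqrt{M_0M_1}$) scale, both tools yield failure probability $\Delta^{-\Omega(1)}$ with room to absorb the $\omega_j$ and $\log\Delta$ factors, so your route is valid. The paper's McDiarmid route is a bit leaner here since there is no need to verify the full hierarchy of $M_{A'}$'s, while your Kim--Vu route is closer in spirit to the argument the paper uses for Lemma \ref{epslemma}; either one is a fine way to prove Lemma \ref{halflemma}.
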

\begin{proof}
Let $N = \max_{H \in \mathcal{H}}v(H)$.
Color the vertices of $G$ uniformly at random with the colors $1$ and $2$.
For each $A \subset V(G)$,
let $d'_j(A)$ denote the $j$-degree of $A$ in the subhypergraph
induced by the minimum color of a vertex in $A$. 

For each $H \in \mathcal{H}$, choose $v_H \in V(H)$ such that $\Delta_{H,v_H}(G) = \Delta_H(G)$.
For each $u \in V(G)$, define a $(v(H)-1)$-uniform hypergraph
\[
T_{H}(u) = \{V(A)-u: A \subset G \text{ with } A \cong_\phi H \text{ and } \phi(u) = v_H \}.
\]
Also, for each $H \in \mathcal{H}$, let
\[
T'_H(u) = \{A \in T_H(u): \forall v \in A, v \text{ receives the same color as } u\}.
\]
Define the following bad events:

\begin{itemize}
\item
$C_{A,j}$: For each $A \subset V(G)$ with $d_j(A) > 0$,
  $C_{A,j}$ denotes the event 
\[
d'_j(A) > \Delta_{j,|A|}(G)/2^{j-|A|} + \Delta^{\frac{j-|A|}{k-1} - \frac{1}{2k}}.
\]

\item
$B_{u, H}$: For each $H \in \mathcal{H}$, $B_{u,H}$ denotes the event 
\[
|T'_H(u)| > \Delta_H(G)/2^{v(H)-1} + \Delta^{\frac{v(H)-1}{k-1}-\frac{1}{2k}}.
\]
\end{itemize}

We use Theorem \ref{mcdiarmid} to bound the probability of each event.
The random variable $d'_j(A)$ is determined by the colors of the vertices
in $N_j(A)$. If $v \in N_j(A)$, changing $v$'s color affects $d'_j(A)$ by at most 
$d_j(A \cup \{v\}) \leq \Delta_{j, |A|+1}$.
Also,
\begin{align*}
\sum_{v \in N_j(A)}d_j(A \cup \{v\})^2 \leq \Delta_{j,|A|+1}(G) \sum_{v \in N_j(A)}d_j(A \cup \{v\})
&< \Delta_{j, |A|+1}(G)j d_j(A) \\
&\leq j \Delta_{j, |A|+1}(G) \Delta_{j, |A|}(G) \\
&\leq 4 j \Delta^{\frac{2j-2|A|-1}{k-1}}\omega_j^{2}.
\end{align*}
Consequently, Theorem \ref{mcdiarmid} and $\omega_j(\Delta) = \Delta^{o(1)}$ imply
\begin{align*}
\Pr[C_{A,j}] <
&\Pr[d'_j(A) > \Delta_{j,|A|}(G)/2^{j-|A|} + (4j\Delta^{\frac{2j-2|A|-1}{k-1}} \omega_j^22N \log\Delta)^{1/2}] \\
&\leq 2e^{-4N\log\Delta} \\
&< \Delta^{-3N}.
\end{align*}

Let $T_H(u,v)$ denote the set of copies of $H$ in $G$ containing both $u$ and $v$.
The random variable $|T'_H(u)|$ is determined by the colors of the vertices 
in $V(T_H(u))$. 
Changing the color of $v \in V(T_H(u))$ affects $|T'_H(u)|$ by at most 
$|T_H(u,v)|$, which, by Proposition \ref{ctlemma} (with $|A|=2$), is at most 
$\omega \Delta^{\frac{v(H)-2}{k-1}}$, where 
$\omega = 2^{(v(H)+1)2^{v(H)}} \max_{j=1}^k \omega_j^{2^{v(H)}} = \Delta^{o(1)}$.
Note also that Proposition \ref{ctlemma} (with $|A|=1$) implies $|T_H(u)| < \omega\Delta^{\frac{v(H)-1}{k-1}}$.
Since
\begin{align*}
\sum_{v \in V(T(u,H))} |T_H(u,v)|^2 
&\leq \omega \Delta^{\frac{v(H)-2}{k-1}} \sum_{v \in V(T(u,H))} |T_H(u,v)| \\
&\leq \omega \Delta^{\frac{v(H)-2}{k-1}}v(H)|T_H(u)| \\
&\leq v(H) \Delta^{\frac{v(H)-2}{k-1} + \frac{v(H)-1}{k-1}}\omega^2 \\
&= v(H) \Delta^{\frac{2v(H)-3}{k-1}} \omega^2,
\end{align*}
\begin{align*}
\Pr[B_{u,H}] 
&< \Pr[T'_H(u) > \Delta_H/2^{v(H)-1} + (v(H) \Delta^{\frac{2v(H)-3}{k-1}} \omega^2 2N\log \Delta)^{1/2}] \\
&\leq 2e^{-4 N\log\Delta} \\
&< \Delta^{-3N}.
\end{align*}
The event $C_{A,j}$ is determined by the colors of the vertices in $N_j(A) \cup A$.
The event $B_{u,H}$ is determined by the colors of the vertices in $T_H(u) \cup \{u\}$.
Thus $C_{A,j}$ depends on:
\begin{itemize}
\item Events of the form $C_{B,i}$, where $N_j(A) \cap N_i(B) \neq \emptyset$.
A vertex $u \in N_j(A)$ is in at most $i\Delta 2^i$ sets $N_i(B)$, so there are at most
$(j\Delta) k^2 \Delta 2^k < \Delta^{2N}$ such events.
\item Events of the form $B_{u,H}$, where $N_j(A) \cap V(T_H(u)) \neq \emptyset$.
Since $|V(T_H(u))| \leq N \Delta^{\frac{N-1}{k-1}}$,
there are at most $(j\Delta)|\mathcal{H}|N \Delta^{\frac{N-1}{k-1}} < \Delta^{2N}$ such events.
\end{itemize}
Also, $B_{u,H}$ depends on:
\begin{itemize}
  \item Events of the form $B_{v,H'}$, were $V(T_H(u)) \cap V(T_{H'}(v)) \neq \emptyset$.
    There are at most $|\mathcal{H}|N^2 \Delta^{2\frac{N-1}{k-1}} < \Delta^{2N}$ such events.
  \item Events of the form $C_{A,j}$, were $N_j(A) \cap V(T_H(u)) \neq \emptyset$.
    There are most \\ $(N\Delta^{\frac{N-1}{k-1}})(k^2\Delta 2^k) < \Delta^{2N}$ such events.
\end{itemize}
Since the probability of each event is at most $\Delta^{-3N}$,
the Local Lemma implies that there exists a $2$-coloring of $V(G)$
such that no event $C_{A,j}$ or $B_{u,H}$ holds.
\end{proof}

\begin{prop}\label{seqclaim}
Let $a,b,m \geq 1$ be fixed and $s_0 = d_0^{a/b}g$ for $g > 0$.
Suppose that the sequences $d_t$ and $s_t$ have initial values $d_0$ and $s_0$ and satisfy
\[
d_{t+1} = \frac{d_t}{2^b} + d_t^{1-1/m} \hspace{10pt}\text{and}\hspace{10pt}
s_{t+1} = \frac{s_t}{2^a} + d_t^{a/b - 1/m}.
\]
Then there exists $D > 0$ such that
\[
d_t \leq 2d_02^{-bt} \hspace{10pt}\text{and}\hspace{10pt}
s_t \leq d_t^{a/b}g + d_t^{a/b-1/(2m)}
\]
for all $d_t \geq D$.
\end{prop}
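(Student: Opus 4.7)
The plan is to prove both inequalities by induction on $t$, handling the sequences in order: first establish the bound on $d_t$, then use it to control $s_t$. In both cases I normalize the variable so that the quantity to be bounded is close to a known constant, and then bound the excess by a convergent geometric series. The hypothesis $d_t \geq D$ plays a dual role here: it caps the number of iterations $t$ (via the inductive bound on $d_t$), and simultaneously it keeps the per-step perturbation $d_t^{-1/m}$ small.

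For the first inequality I introduce $\rho_t = 2^{bt} d_t/d_0$, so the target becomes $\rho_t \leq 2$ with $\rho_0 = 1$. The recurrence rewrites as
\[
\rho_{t+1} = \rho_t + 2^{b}\, \rho_t^{1-1/m}\, d_0^{-1/m}\, 2^{bt/m}.
\]
Under the inductive hypothesis $\rho_t \leq 2$, the factor $\rho_t^{1-1/m}$ is bounded by a constant, and telescoping yields $\rho_t \leq 1 + C\, d_0^{-1/m}(2^{bt/m}-1)/(2^{b/m}-1)$. Now the stopping condition $d_t \geq D$ combined with the inductive bound $d_t \leq 2 d_0 2^{-bt}$ forces $2^{bt/m} \leq (2d_0/D)^{1/m}$, so the cumulative error collapses to something of the form $C'/D^{1/m}$, and choosing $D$ large enough (depending only on $b$ and $m$) makes this at most $1$.

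For the second inequality I work with $\sigma_t = s_t/d_t^{a/b}$ and the excess $\epsilon_t = \sigma_t - g$, starting at $\epsilon_0 = 0$. Using $d_{t+1}/d_t = 2^{-b}(1 + 2^b d_t^{-1/m})$ and $(1+x)^{a/b} \geq 1$, the $s_t$-recurrence gives
\[
\epsilon_{t+1} \leq \frac{\epsilon_t + 2^a d_t^{-1/m}}{(1 + 2^b d_t^{-1/m})^{a/b}} \leq \epsilon_t + 2^a d_t^{-1/m}.
\]
Telescoping and using the crude lower bound $d_s \geq d_0 2^{-bs}$ (immediate from $d_{s+1} \geq d_s/2^b$) again produces a geometric sum, and invoking $d_t \geq D$ a second time reduces the accumulated error to a constant multiple of $D^{-1/m}$. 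This can then be absorbed into the slightly larger allowed error $d_t^{-1/(2m)} \geq D^{-1/(2m)}$ by taking $D$ large, which is exactly why the exponent $1/(2m)$ rather than $1/m$ appears in the statement.

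The only potential subtlety, and the main obstacle, is checking that a single threshold $D$ depending only on $a,b,m$ (not on $d_0$ or $g$) suffices for both inductions. This works because in each case the natural bound on the accumulated error ultimately scales like $D^{-1/m}$, with the $d_0^{-1/m}$ factor from the per-step increment cancelling the $d_0^{1/m}$ factor coming from the bound $t \leq \log_2(2d_0/D)/b$ on the length of the induction. Beyond this observation, the argument is purely computational: no probabilistic or structural ideas are needed, only careful bookkeeping of geometric series.
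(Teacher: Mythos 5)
Your route is genuinely different from the paper's, and for the $d_t$ part it is fine. The paper proves the first bound by guessing and verifying the closed-form inductive hypothesis $d_t \leq \bigl(\tfrac{2^b}{m(2^{b/m}-1)} + d_0^{1/m}2^{-bt/m}\bigr)^m$, which threads through the recurrence exactly and holds for all $t$ with no threshold; you instead normalize to $\rho_t = 2^{bt}d_t/d_0$ and telescope the increments, invoking $d_t \geq D$ inside the induction. For $s_t$, the paper unrolls the recurrence to the exact identity $s_t = s_0 x^{-t} + \sum_{k<t} d_k^{y} x^{k-t}$ and then estimates the geometric sum, whereas you normalize to $\sigma_t = s_t/d_t^{a/b}$ and bound the increments of $\epsilon_t = \sigma_t - g$. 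Both routes rest on the same two facts: the error accumulates geometrically, and $d_t \geq D$ caps the number of iterations; your version is shorter and avoids the somewhat mysterious inductive ansatz, at the cost of having the threshold appear inside the induction rather than only at the end.

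There is, however, a real error in your conclusion for $s_t$. You collapse the accumulated error to a constant multiple of $D^{-1/m}$ and then write that this is absorbed by the allowed error because $d_t^{-1/(2m)} \geq D^{-1/(2m)}$; but $d_t \geq D$ gives $d_t^{-1/(2m)} \leq D^{-1/(2m)}$, the opposite inequality. More fundamentally, $d_t$ can be as large as $d_0$, which is unbounded, so $d_t^{-1/(2m)}$ can be made arbitrarily small and no fixed constant of the form $C D^{-1/m}$ can dominate it uniformly over the range $d_t \geq D$. The repair is to keep the $d_t$-dependence in the error: from the already-proved $d_0 2^{-bt} \geq d_t/2$ you get $2^{bt/m} \leq (2d_0/d_t)^{1/m}$, so the telescoped error is at most $C_1 d_t^{-1/m}$ with $C_1 = C_1(a,b,m)$. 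Then $C_1 d_t^{-1/m} \leq d_t^{-1/(2m)}$ precisely when $d_t \geq C_1^{2m}$, which is what fixes $D$. This is exactly the point at which the paper substitutes $d_0 \leq d_t 2^{bt}$ into the geometric sum instead of collapsing it to a pure constant.
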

\begin{proof}
Note that for any constant $c$, as $d_0 \to \infty$,
\begin{align*}
(c+d_0^{1/m}2^{-bt/m})^m = d_0 2^{-bt} + \sum_{i=0}^{m-1} \binom{m}{i}c^{m-i}d_0^{i/m}2^{-\frac{bti}{m}} 
= d_02^{-bt} + o(d_0) \leq 2d_02^{-bt}.
\end{align*}
To prove the first inequality, we will thus prove by induction the tighter
bound (for $D$ sufficiently large)
\[
d_t \leq (\frac{2^b}{m(2^{b/m}-1)} + d_0^{1/m}2^{-bt/m})^m.
\]
This is clear for $d_0$, so assume the bound for $d_t$.
Then
\begin{align*}
d_{t+1} 
= \frac{d_t}{2^b} + d_t^{1-1/m}
\leq \frac{1}{2^b}(\frac{2^b}{m}+d_t^{1/m})^m
&\leq \frac{1}{2^b}(\frac{2^b}{m}+\frac{2^b}{m(2^{b/m}-1)}+d_0^{1/m}2^{-bt/m})^m \\
&= (\frac{2^b}{m(2^{b/m}-1)} + d_0^{1/m}2^{-b(t+1)/m})^m.
\end{align*}
For the second inequality, we first prove by induction
$s_{t+1} = \frac{s_0}{x^{t+1}} + \sum_{k=0}^{t}\frac{d_k^{y}}{x^{t-k}}$,
where $x = 2^a$ and $y = a/b - 1/m$.
This is clear for $t = 0$, so assume the bound for $s_t$. 
Then 
\begin{align}\label{sfull}
s_{t+1} = \frac{s_t}{x} + d_t^y = 
\frac{s_0}{x^{t+1}} + \frac{1}{x}\sum_{k=0}^{t-1}\frac{d_k^y}{x^{t-1-k}} + d_t^y =
\frac{s_0}{x^{t+1}} + \sum_{k=0}^{t}\frac{d_k^y}{x^{t-k}},
\end{align}
completing the induction. 
Using $d_k \leq 2d_02^{-bk}$ and $a - by = b/m \geq 0$,
\begin{align}\label{sright}
\sum_{k=0}^t \frac{d_k^y}{x^{t-k}}
\leq \frac{(2d_0)^y}{x^t} \sum_{k=0}^t (\frac{x}{2^{by}})^k
= \frac{(2d_0)^y}{x^t} \sum_{k=0}^t 2^{(a-by) k}
&< \frac{(2d_0)^y}{x^t} \frac{2^{(t+1)(a-by)}}{2^{a-by}-1} \notag \\
&= \frac{(2d_0)^y 2^{(t+1)(a-by)-at}}{2^{b/m}-1}
\end{align}
By definition of $d_t$, $d_t \geq d_02^{-bt}$.
Using this with \eqref{sfull} and with \eqref{sright},
\begin{align*}
s_{t} 
< s_0 2^{-at} + \frac{(2d_0)^y 2^{a-by-tby}}{2^{b/m}-1}
&\leq s_0 (\frac{d_t}{d_0})^{a/b} + \frac{(2 d_t 2^{bt})^y 2^{a-by-tby}}{2^{b/m}-1} \\
&\leq d_0^{a/b}g (\frac{d_t}{d_0})^{a/b} + 
  \frac{d_t^y 2^{y + a - by}}{2^{b/m}-1} \\
&= d_t^{a/b}g + \frac{d_t^{a/b - 1/m} 2^{y+a-by}}{2^{b/m}-1} \\
&< d_t^{a/b}g + d_t^{a/b - 1/(2m)},
\end{align*}
where the last inequality assumes
\[
d_t \geq D > (\frac{2^{y+a-by}}{2^{b/m}-1})^{2m}.
\]
\end{proof}

\noindent
\emph{Proof of Lemma \ref{partitionlemma}}. 
Define sequences 
\begin{align*}
d_{t+1} &= \frac{d_t}{2^{k-1}} + d_t^{1-\frac{1}{2k}} \\
r_{j, l, t+1} &= \frac{r_{j, l, t}}{2^{j-l}} + d_t^{\frac{j-l}{k-1}-\frac{1}{2k}}, 1 \leq l < j \leq k \\
s_{H, t+1} &= \frac{s_{H,t}}{2^{v(H)}-1} + d_t^{\frac{v(H)-1}{k-1} - \frac{1}{2k}}, H \in \mathcal{H},
\end{align*}
where $d_0 = \Delta$, $r_{j,l,0} = \Delta^{\frac{j-l}{k-1}}\omega_j(\Delta)$, 
 and $s_{H, 0} = \Delta^{\frac{v(H)-1}{k-1}}/f^{v(H)}$.
Since $r_{j,l,0} \leq 2\Delta^{\frac{j-l}{k-1}}\omega_j(\Delta)$, 
we may apply Lemma \ref{halflemma} to $G$ to obtain hypergraphs
$G_{1,1}$ and $G_{1,2}$, each with 
maximum degree at most $d_1 = r_{k,1,1}$,
maximum $(j,l)$-degree at most $r_{j,l,1}$, and 
maximum $H$-degree at most $s_{H,1}$.
We apply this halving step a total of $T$ times,
where $T = \lceil \frac{1}{k-1} \log_2 \frac{2 d_0}{f^{4Nk}} \rceil$
and $N = \max_{H \in \mathcal{H}} v(H)$.
Specifically, at each step, we apply Lemma \ref{halflemma} 
(with parameters $d_t = r_{k,1,t},$ $r_{j,l,t},$ $s_{H,t}$) to each of the
hypergraphs $G_{t, 1}, \dots, G_{t, 2^t}$. 
Since $\omega_j(\Delta) = f^{o(1)}$, and for $t \leq T$, 
\begin{equation}\label{dtinfty}
d_t \geq d_0 2^{-(k-1)T} \geq d_0 2^{-(k-1)(1 + \frac{1}{k-1}\log_2\frac{2d_0}{f^{4Nk}})} = 2^{-k} f^{4Nk},
\end{equation}
$\omega_j(\Delta) = d_t^{o(1)}$.
Also, recall that $f$ is sufficiently large, so by \eqref{dtinfty}, we may assume that if $t \leq T$, then
$d_t$ is sufficiently large to apply Proposition \ref{seqclaim}. Thus Proposition \ref{seqclaim}
(with each $r_{j,l,t}$ in the role of $s_t$, $g=\omega_j(\Delta), b=k-1, a=j-l$, and $m=2k$)
implies
\[
r_{j,l,t} \leq d_t^{\frac{j-l}{k-1}}\omega_j(\Delta) + d_t^{\frac{j-l}{k-1} - \frac{1}{4k}} 
\leq 2d_t^{\frac{j-l}{k-1}}\omega_j(\Delta).
\]
Thus each $G_{t,i}$ is $(d_t, 2\omega_2, \dots, 2\omega_k)$-sparse, 
so we may apply Lemma \ref{halflemma} to obtain $2^{t+1}$ new hypergraphs
$G_{t+1,1}, \dots, G_{t+1,2^{t+1}}$ such that each $G_{t+1,i}$ has
maximum degree $d_{t+1} = r_{k,1,t+1}$,
maximum $(j,l)$-degree $r_{j, l, t+1}$, and maximum $H$-degree $s_{H, t+1}$.
In the final step, we obtain $2^T$ hypergraphs $G_{T, 1}, \dots, G_{T, 2^T}$,
each with maximum degree $d_T$, maximum $(j,l)$-degree $r_{j, l, T}$ and maximum $H$-degree $s_{H, T}$.
By Proposition \ref{seqclaim}, 
\[
d_T \leq 2 d_0 2^{-(k-1)T} \leq f^{4Nk} < 2^k f^{4Nk},
\]
and
\[
r_{j,l,T} \leq 2d_T^{\frac{j-l}{k-1}}\omega_j(\Delta) 
%\leq 2 (f^{4Nk})^{\frac{k+1-l}{k}}\omega_j(\Delta)
\leq ( 2^k f^{4Nk})^{\frac{j-l}{k-1}}\omega_j(\Delta).
\]
Proposition \ref{seqclaim} (with each $s_{H,T}$ in the role of $s_T$, 
$g=1/f^{v(H)}, b=k-1, a=v(H)-1,$ and $m=2k$) also yields
\begin{align*}
s_{H,T} 
&\leq d_T^{\frac{v(H)-1}{k-1}} / f^{v(H)} + d_T^{\frac{v(H)-1}{k-1}-\frac{1}{4k}} \\
&\leq (f^{4Nk})^{\frac{v(H)-1}{k-1}} / f^{v(H)} + (f^{4Nk})^{\frac{v(H)-1}{k-1}-\frac{1}{4k}}  \\
&\leq  2(f^{4Nk})^{\frac{v(H)-1}{k-1}} / f^{v(H)} \\
&= 2(f^{4Nk})^{\frac{v(H)-1}{k-1} - \frac{v(H)}{4Nk}} \\
&< (2^k f^{4Nk})^{\frac{v(H)-1}{k-1} - \frac{v(H)}{4Nk}}.
\end{align*}
We may therefore apply Lemma \ref{epslemma}
(with $\Delta = 2^k f^{4Nk}, \omega_j(\Delta),$ 
and $\epsilon = \frac{1}{4Nk}$)
to partition each of the hypergraphs $G_{T,1}, \dots, G_{T, 2^T}$
into $O(f^{4Nk(\frac{1}{k-1} - \frac{1}{4Nk})})$ parts such that the hypergraph
induced by each part is $\mathcal{H}$-free and has maximum $j$-degree at most
$2 (2^k f^{4Nk})^{\frac{j-1}{4Nk}}\omega_j < 2^{2k}f^{j-1}\omega_j$ for each $1 \leq j \leq k$.
Summing over each of the $2^T$ hypergraphs, this results in a total
of
\[
2^T O(f^{\frac{4Nk}{k-1}-1})
\leq 2^{1+\frac{1}{k-1}\log_2\frac{2d_0}{f^{4Nk}}} O(f^{\frac{4Nk}{k-1}-1})
= O(\Delta^{\frac{1}{k-1}}/f)
\]
$\mathcal{H}$-free hypergraphs, each with maximum $j$-degree at most $2^{2k}f^{j-1}\omega_j(\Delta)$.
\bibliographystyle{amsplain}
\bibliography{bib}

\providecommand{\bysame}{\leavevmode\hbox to3em{\hrulefill}\thinspace}
\providecommand{\MR}{\relax\ifhmode\unskip\space\fi MR }
% \MRhref is called by the amsart/book/proc definition of \MR.
\providecommand{\MRhref}[2]{%
  \href{http://www.ams.org/mathscinet-getitem?mr=#1}{#2}
}
\providecommand{\href}[2]{#2}
\begin{thebibliography}{10}

\bibitem{aloncoloringsparse}
Noga Alon, Michael Krivelevich, and Benny Sudakov, \emph{Coloring graphs with
  sparse neighborhoods}, J. Combin. Theory Ser. B \textbf{77} (1999), no.~1,
  73--82. \MR{1710532 (2001a:05054)}

\bibitem{benboh}
Patrick Bennett and Tom Bohman, \emph{A note on the random greedy independent
  set algorithm},  (2013).

\bibitem{trifreebohman}
Tom Bohman, \emph{The triangle-free process}, Adv. Math. \textbf{221} (2009),
  no.~5, 1653--1677. \MR{2522430 (2010h:05271)}

\bibitem{bohmankeevash13}
Tom Bohman and Peter Keevash, \emph{Dynamic concentration of the triangle-free
  process}, http://arxiv.org/abs/1302.5963 (2013).

\bibitem{borodinkostochka}
O.~V. Borodin and A.~V. Kostochka, \emph{On an upper bound of a graph's
  chromatic number, depending on the graph's degree and density}, J.
  Combinatorial Theory Ser. B \textbf{23} (1977), no.~2-3, 247--250.
  \MR{0469803 (57 \#9584)}

\bibitem{catlin}
Paul~A. Catlin, \emph{A bound on the chromatic number of a graph}, Discrete
  Math. \textbf{22} (1978), no.~1, 81--83. \MR{522914 (80a:05090a)}

\bibitem{cmcoloring3}
Jeff Cooper and Dhruv Mubayi, \emph{List coloring triangle-free hypergraphs},
  Random Structures Algorithms (to appear).

\bibitem{erdoslovasz}
P.~Erd{\H{o}}s and L.~Lov{\'a}sz, \emph{Problems and results on {$3$}-chromatic
  hypergraphs and some related questions}, Infinite and finite sets ({C}olloq.,
  {K}eszthely, 1973; dedicated to {P}. {E}rd{\H o}s on his 60th birthday),
  {V}ol. {II}, North-Holland, Amsterdam, 1975, pp.~609--627. Colloq. Math. Soc.
  J\'anos Bolyai, Vol. 10. \MR{0382050 (52 \#2938)}

\bibitem{trifreelong13}
Gonzalo Fiz~Pontiveros, Simon Griffiths, and Robert Morris, \emph{The
  triangle-free process and r(3,k)}, http://arxiv.org/abs/1302.6279 (2013).

\bibitem{fmcoloringk}
Alan Frieze and Dhruv Mubayi, \emph{Coloring simple hypergraphs}, Journal of
  Combinatorial Theory. Series B (to appear).

\bibitem{johansson}
A~Johansson, \emph{Asymptotic choice number for triangle free graphs}, Tech.
  report, DIMACS, 1996.

\bibitem{kimc4}
Jeong~Han Kim, \emph{On {B}rooks' theorem for sparse graphs}, Combin. Probab.
  Comput. \textbf{4} (1995), no.~2, 97--132. \MR{1342856 (96f:05078)}

\bibitem{kimvu}
Jeong~Han Kim and Van~H. Vu, \emph{Concentration of multivariate polynomials
  and its applications}, Combinatorica \textbf{20} (2000), no.~3, 417--434.
  \MR{1774845 (2002b:05123)}

\bibitem{lawrence}
Jim Lawrence, \emph{Covering the vertex set of a graph with subgraphs of
  smaller degree}, Discrete Math. \textbf{21} (1978), no.~1, 61--68. \MR{523419
  (80a:05094)}

\bibitem{mcdiarmid}
Colin McDiarmid, \emph{On the method of bounded differences}, Surveys in
  combinatorics, 1989 ({N}orwich, 1989), London Math. Soc. Lecture Note Ser.,
  vol. 141, Cambridge Univ. Press, Cambridge, 1989, pp.~148--188. \MR{1036755
  (91e:05077)}

\bibitem{molloyreed}
Michael Molloy and Bruce Reed, \emph{Graph colouring and the probabilistic
  method}, Algorithms and Combinatorics, vol.~23, Springer-Verlag, Berlin,
  2002. \MR{1869439 (2003c:05001)}

\end{thebibliography}
\end{document}